\newtheorem{theorem}{Theorem}[section]
\newtheorem{lemma}[theorem]{Lemma}
\newtheorem{corollary}[theorem]{Corollary}
\newtheorem{definition}[theorem]{Definition}
\newtheorem{remark}[theorem]{Remark}
\newtheorem{example}[theorem]{Example}
\subjclass[2010]{Primary~57R95, Secondary~57R50, 57R40}
\keywords{minimal genus, $4$-manifolds, $T^{2}$-bundles, adjunction inequality}
\begin{document}
\title{Minimal genus problem for $T^{2}$-bundles over surfaces}
\author{Reito Nakashima}
\date{\today}
\maketitle

\begin{abstract}
For any positive integer $g$, we completely determine the minimal genus function for $\Sigma_{g}\times T^{2}$. We show that the lower bound given by the adjunction inequality is not sharp for some class in $H_{2}(\Sigma_{g}\times T^{2})$. However, we construct a suitable embedded surface for each class and we have exact values of minimal genus functions.
\end{abstract}


\section{introduction}

\subsection{Minimal genus functions}
Let $M$ be a smooth closed oriented $4$-manifold. It is well-known that any homology class $\sigma$ in $H_{2}(M)$ is represented by a connected oriented smoothly embedded surface $\Sigma\subset M$. For each class $\sigma$ in $H_{2}(M)$, we want to determine the minimal genus of a surface which represents the class $\sigma$.
\begin{definition}
Let $M$ be a smooth closed oriented $4$-manifold. The minimal genus function $G:H_{2}(M)\to\mathbb{Z}$ is defined for each class $\sigma\in H_{2}(M)$ by\[ G(\sigma):=\min\{g(\Sigma)\mid\Sigma\subset M \mbox{ a connected smooth embedded surface representing } \sigma\}.\]
\end{definition}
In general, calculating minimal genus functions is a difficult problem and there are not so many examples of $4$-manifolds whose minimal genus functions are completely known.

In 1994, P. B. Kronheimer and T. S. Mrowka~\cite{km} solved this problem for the complex projective plane, known as the Thom conjecture, using Seiberg-Witten theory. Their result says that the minimal genus of a surface representing $dh\in H_{2}(\mathbb{CP}^{2})\cong\mathbb{Z}$ is $\frac{1}{2}(|d|-1)(|d|-2)$, where $d$ is a non-zero integer and $h\in H_{2}(\mathbb{CP}^{2})$ is a generator. 

Other examples are given by Bang-He Li and Tian-Jun Li~\cite{bt2}. They determined the minimal genus functions for $S^{2}$-bundles over closed oriented surfaces completely.

See Terry Lawson's survey~\cite{t} for more about minimal genus problems.

\newpage

\subsection{Main results}
Our main theorem is

\begin{theorem}\label{theorem:1}
Let $M$ be $\Sigma_g\times T^{2}$ and let $\sigma$ be a class in $H_{2}(M)$, where $\Sigma_{g}$ is an oriented closed surface of genus $g\geq 1$. We have
\begin{eqnarray*}
G(\sigma)=\left\{\begin{array}{ll}
0 & (\sigma=0) \\
1+\dfrac{1}{2}|\sigma\cdot\sigma|+(g-1)|\sigma\cdot F| & (*) \\
2 & (otherwise) \\ 
\end{array}\right.,
\end{eqnarray*}
where $F=[\{*\}\times T^2]\in H_{2}(M)$ and the condition $(*)$ means that one of the following conditions is satisfied.
\begin{itemize}
\item$F\cdot\sigma\neq 0$.
\item$\sigma\cdot\sigma\neq 0$.
\item$\sigma\neq 0 \mbox{ and } \sigma=u\otimes v+n(-F)\mbox{ for } \mbox{ some } u\in H_{1}(\Sigma_{g}), v\in H_{1}(T^{2}) \mbox{ and } n\in\mathbb{Z}$.
\end{itemize}
\end{theorem}
Using the adjunction inequality, we have the lower bound \[G(\sigma)\geq1+\dfrac{1}{2}|\sigma\cdot\sigma|+(g-1)|\sigma\cdot F|\] for every class $\sigma$ in $H_{2}(M)\setminus\{0\}$. If a class $\sigma$ is in the last exceptional case, we have $G(\sigma)\geq 1$, whereas we show that there are no embedded surface that gives the equality $G(\sigma) = 1$. In this case we improve the lower bound by $1$. That is, the adjunction inequality is not sharp. See Lemma \ref{lemma:3} for details.

Since the minimal genus function on $H_{2}(M)$ is invariant under actions induced by self-diffeomorphisms of $M$, it suffices to show, for each orbit of these actions, the above equality for a representative of the orbit. For each representative, we construct a connected embedded surface representing the class whose genus is as in the theorem by the circle sum operation. The circle sum operation is an operation which constructs a new connected embedded surface from two connected embedded surfaces in a $4$-manifold. We discuss about circle sum operation in Section \ref{section:1}. See also Bang-He Li and Tian-Jun Li~\cite{bt1} for details.

In Section \ref{section:2}, we explain some corollaries related to our main result. The first corollary is about complexity of embedded surfaces. We interpret our main result to complexity of connected surfaces, and then we compare with the minimal complexity functions which allow disconnected closed surfaces for representing surfaces. In our case, non-sharpness of the adjunction inequality gives difference between the connected version of the minimal complexity functions and the disconnected version.
 
The second is the result for some non-trivial $T^{2}$-bundles over surfaces. Let $N$ be a non-trivial $S^{1}$-bundle over a genus $g$ surface and let $M=N\times S^{1}$. In this case, our constructions of surfaces embedded in $\Sigma_{g}\times T^{2}$ also work and we get exact values of the minimal genus function completely.

Finally, we observe automorphisms on $H_{2}(M)$ for $M=\Sigma_{g}\times T^{2}$ with $g\geq 2$. Let $\mathcal{H}$ be the subgroup of the automorphism group ${\rm Aut}(H_{2}(M))$ defined by $\mathcal{H}=\{\phi\in{\rm Aut}(H_{2}(M))\mid\phi^{*}Q=Q \mbox{ and }\phi^{*}G=G\}$, where $Q$ is the intersection form of $M$, and let $\theta:{\rm Diff}^{+}(M)\to\mathcal{H}$ be the obvious homomorphism from the group of orientation preserving diffeomorphisms of $M$. We show that $\mathcal{H}/{\rm Im}\theta\cong\mathbb{Z}/2\mathbb{Z}$ and give an explicit generator set for ${\rm Im}\theta$.

In Section \ref{section:3} we show that there are topologically locally-flat surfaces in $\Sigma_{g}\times T^{2}$ whose genera are strictly smaller than the result stated in Theorem \ref{theorem:1}.


\subsection{Notations}
In this paper, we always assume that $4$-manifolds and surfaces are oriented and assume that $g$ is a positive integer. All homology groups have coefficients in $\mathbb{Z}$. 


\section{preliminaries}
\subsection{Self-diffeomorphisms of $M$ and induced actions on $H_{2}(M)$}\label{section:0}

First, we consider the second homology group of $M$. By the K\"{u}nneth formula, we have \[H_{2}(M)\cong(H_{1}(\Sigma_{g})\otimes H_{1}(T^{2}))\oplus H_{2}(\Sigma_{g})\oplus H_{2}(T^{2})\cong\mathbb{Z}^{4g+2}.\]

We fix an orientation of $\Sigma_{g}$ and take a symplectic basis $x_{1}, z_{1}, x_{2}, z_{2}, \dots, x_{g}, z_{g}$ of $H_{2}(\Sigma_{g})$ so that the basis satisfies $x_{i}\cdot z_{j} = \delta_{ij}$, $x_{i}\cdot x_{j} = 0$ and $z_{i}\cdot z_{j} = 0$ for any $1\leq i, j\leq g$, where $\delta_{ij}$ is the Kronecker delta. We identify each element with a loop embedded in $\Sigma_{g}$ as in the Figure \ref{figure:1}. Similarly, we also fix an orientation of $T^{2}$, take a symplectic basis $y, t$ of $H_{2}(T^{2})$ with $y\cdot t = 1$ and identify the element $y$ with a loop $S^{1}\times \{*\}$ and the element $t$ with a loop $\{*\}\times S^{1}$.

\begin{figure}[h]
\centering
\includegraphics[width=12cm]{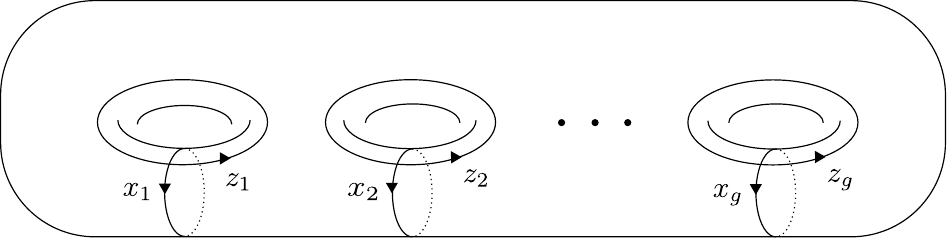}
\caption{\label{figure:1}}
\end{figure}

For $u \in H_{1}(\Sigma_{g})$ and $v \in H_{1}(T^{2})$, we denote $u\otimes v \in H_{2}(M)$ by $T_{uv}$. Furthermore, we denote $[\Sigma_{g}\times\{*\}]\in H_{2}(M)$ by $S$ and denote $[\{*\}\times T^2]\in H_{2}(M)$ by $F$. Then we take  \[T_{x_{1}y}, T_{z_{1}t}, T_{x_{1}t}, (-T_{z_{1}y}), \dots ,T_{x_{g}y}, T_{z_{g}t}, T_{x_{g}t}, (-T_{z_{g}y}), S, (-F)\] as a basis of $H_{2}(M)$. We fix the orientation of $M$ by ${T_{x_{1}y}\cdot T_{z_{1}t}=1}$. The intersection form $Q$ of $M$ is $H^{\oplus 2g+1}$, where $H$ is $\left(\begin{array}{cc}0 & 1 \\1 & 0 \end{array}\right)$.

We identify 
\[\sum_{i=1}^{g}(a_{i}T_{x_{i}y}+b_{i}T_{z_{i}t}+c_{i}T_{x_{i}t}+d_{i}(-T_{z_{i}y}))+eS+f(-F)\in H_{2}(M)\]
 with $(a_{1}, b_{1}, c_{1}, d_{1}, \dots, a_{g}, b_{g}, c_{g}, d_{g}, e, f)\in \mathbb{Z}^{4g+2}$.

Next, we construct self-diffeomorphisms of $M$ and consider these actions on the second homology group of $M$.

Let $\gamma_\alpha$ be a simple closed curve representing a primitive class $\alpha\in H_{1}(\Sigma_{g})$. Then, for each primitive class $\alpha$ in $H_{1}(\Sigma_{g})$, we have a diffeomorphism \[R_\alpha:\Sigma_{g}\to\Sigma_{g}\] defined by a right handed Dehn twist along $\gamma_\alpha$. For any class $\tau\in H_{1}(\Sigma_{g})$, we have \[(R_{\alpha})_{*}(\tau)=\tau+(\alpha\cdot\tau)\alpha\] and hence, we have
\begin{eqnarray*}
(R_{z_{i}})_{*}(x_{i}) & = & x_{i}-z_{i}, \\ 
(R_{z_{i}})_{*}(z_{i}) & = & z_{i}, \\
(R_{x_{i}})_{*}(x_{i}) & = & x_{i}, \\ 
(R_{x_{i}})_{*}(z_{i}) & = & z_{i}+x_{i}, \\
(R_{z_{i}+z_{j}})_{*}(x_{i}) & = & x_{i}-z_{i}-z_{j}, \\ 
(R_{z_{i}+z_{j}})_{*}(x_{j}) & = & x_{j}-z_{i}-z_{j}, \\
(R_{z_{i}+x_{j}})_{*}(x_{i}) & = & x_{i}-z_{i}-x_{j}, \\ 
(R_{z_{i}+x_{j}})_{*}(z_{j}) & = & z_{j}+z_{i}+x_{j},
\end{eqnarray*}
where $i\neq j$. 

Now, we have a self-diffeomorphism of $M=\Sigma_g\times T^{2}$ by $R_\alpha\times Id_{T^{2}}$. For the rest of this paper, we denote $R_\alpha\times Id_{T^{2}}$ by $R_\alpha$ for simplicity. The map $R_\alpha$ induces an isomorphism $(R_{\alpha})_{*}:H_{2}(M)\to H_{2}(M)$.

We have
\begin{eqnarray*}
(R_{z_{i}})_{*}(T_{x_{i}y}) & = & T_{x_{i}y}+(-T_{z_{i}y}), \\ 
(R_{z_{i}})_{*}(T_{x_{i}t}) & = & T_{x_{i}t}-T_{z_{i}t}, \\
(R_{x_{i}})_{*}(T_{z_{i}t}) & = & T_{z_{i}t}+T_{x_{i}t}, \\ 
(R_{x_{i}})_{*}(-T_{z_{i}y}) & = & (-T_{z_{i}y})-T_{x_{i}y}, \\
(R_{z_{i}+z_{j}})_{*}(T_{x_{i}y}) & = & T_{x_{i}y}+(-T_{z_{i}y})+(-T_{z_{j}y}), \\ 
(R_{z_{i}+z_{j}})_{*}(T_{x_{i}t}) & = & T_{x_{i}t}-T_{z_{i}t}-T_{z_{j}t}, \\
(R_{z_{i}+z_{j}})_{*}(T_{x_{j}y}) & = & T_{x_{j}y}+(-T_{z_{i}y})+(-T_{z_{j}y}), \\ 
(R_{z_{i}+z_{j}})_{*}(T_{x_{j}t}) & = & T_{x_{j}t}-T_{z_{i}t}-T_{z_{j}t}, \\
(R_{z_{i}+x_{j}})_{*}(T_{x_{i}y}) & = & T_{x_{i}y}+(-T_{z_{i}y})-T_{x_{j}y}, \\ 
(R_{z_{i}+x_{j}})_{*}(T_{x_{i}t}) & = & T_{x_{i}t}-T_{z_{i}t}-T_{x_{j}t}, \\
(R_{z_{i}+x_{j}})_{*}(T_{z_{j}t}) & = & T_{z_{j}t}+T_{z_{i}t}+T_{x_{j}t}, \\ 
(R_{z_{i}+x_{j}})_{*}(-T_{z_{j}y}) & = & (-T_{z_{j}y})+(-T_{z_{i}y})-T_{x_{j}y}.
\end{eqnarray*}

Let $\sigma$ be $(a_{1}, b_{1}, c_{1}, d_{1}, \dots, a_{g}, b_{g}, c_{g}, d_{g}, e, f)\in \mathbb{Z}^{4g+2}$. By the above computation, we have
\begin{eqnarray*}
(R_{z_{i}})_{*}(\sigma) & = & (\dots, a_{i}, b_{i}-c_{i}, c_{i}, d_{i}+a_{i}, \dots), \\
(R_{x_{i}})_{*}(\sigma) & = & (\dots, a_{i}- d_{i}, b_{i}, c_{i}+ b_{i}, d_{i}, \dots), \\
(R_{z_{i}+z_{j}})_{*}(\sigma) & = & (\dots, a_{i}, b_{i}-c_{i}-c_{j}, c_{i}, d_{i}+a_{i}+a_{j}, \\
&& \ \dots, a_{j}, b_{j}-c_{i}-c_{j}, c_{j}, d_{j}+a_{i}+a_{j}, \dots), \\
(R_{z_{i}+x_{j}})_{*}(\sigma) & = & (\dots, a_{i}, b_{i}-c_{i}+b_{j}, c_{i}, d_{i}+a_{i}+d_{j}, \\
&& \ \dots, a_{j}-a_{i}-d_{j}, b_{j}, c_{j}-c_{i}+b_{j}, d_{j}, \dots).
\end{eqnarray*}

We need more diffeomorphisms to simplify classes in $H_{2}(M)$ sufficiently.

We define a diffeomorphism $D_{x_{i}y}:M\to M$ as follows. Let $U\cong I\times \mathbb{R}/\mathbb{Z}$ be a closed tubular neighborhood of the loop $z_{i}\subset\Sigma_{g}$, where $I$ is the closed interval $[0,1]$. We define a self-diffeomorphism $D_{x_{i}y}$ of $U\times T^{2}\cong I\times\mathbb{R}/\mathbb{Z}\times(\mathbb{R}/\mathbb{Z})^{2}\subset M$ by 
\[D_{x_{i}y}(x, z, y, t)=\left(x, z, y+\frac{\lambda(x)}{\lambda(1)}, t\right),\]
where $\lambda:I\to \mathbb{R}$ is defined by
\[\lambda(x)=\int_{0}^{x}e^{-\frac{1}{w^{2}(1-w)^{2}}}dw.\]
Note that $\lambda$ satisfies $\lambda^{(n)}(0)=\lambda^{(n)}(1)=0$ for all $n\geq 1$.
Since the restriction of this diffeomorphism on the boundary is the identity map, we can trivially extend this diffeomorphism to a diffeomorphism on $M$. Note that we have
\[(D_{x_{i}y})_{*} : x_{j}\mapsto x_{j} + \delta_{ij}y, \ z_{j}\mapsto z_{j}, \ y\mapsto y, \ t\mapsto t.\]

Then we have 
\begin{eqnarray*}
(D_{x_{i}y})_{*}(T_{x_{i}y}) & = & T_{x_{i}y}, \\
(D_{x_{i}y})_{*}(T_{z_{i}t}) & = & T_{z_{i}t}, \\
(D_{x_{i}y})_{*}(T_{x_{i}t}) & = & T_{x_{i}t}-(-F), \\
(D_{x_{i}y})_{*}(-T_{z_{i}y}) & = & (-T_{z_{i}y}), \\
(D_{x_{i}y})_{*}(S) & = & S+(-T_{z_{i}y}), \\
(D_{x_{i}y})_{*}(-F) & = & (-F)
\end{eqnarray*}
and hence, we have
\[(D_{x_{i}y})_{*}(\sigma)=(\dots, a_{i}, b_{i}, c_{i}, d_{i}+e, \dots ,e ,f-c_{i}),\]
where $\sigma$ is $(a_{1}, b_{1}, c_{1}, d_{1}, \dots, a_{g}, b_{g}, c_{g}, d_{g}, e, f)\in \mathbb{Z}^{4g+2}$.

We define self-diffeomorphisms $f_{y}$ and $f_{t}$ of $T^{2}\cong (\mathbb{R}/\mathbb{Z})^{2}$ by
\begin{eqnarray*}
f_{y}(y, t) & = & (y, t+y), \\
f_{t}(y, t) & = & (y+t, t)
\end{eqnarray*}
and denote diffeomorphisms $Id_{\Sigma_{g}}\times f_{y}$ and $Id_{\Sigma_{g}}\times f_{t}$ of $M$ simply by $f_{y}$ and $f_{t}$.

Then we have
\begin{eqnarray*}
(f_{y})_{*}(T_{x_{i}y}) & = & T_{x_{i}y}+T_{x_{i}t}, \\
(f_{y})_{*}(T_{z_{i}t}) & = & T_{z_{i}t }, \\
(f_{y})_{*}(T_{x_{i}t}) & = & T_{x_{i}t}, \\
(f_{y})_{*}(-T_{z_{i}y}) & = & (-T_{z_{i}y})-T_{z_{i}t}, \\
(f_{t})_{*}(T_{x_{i}y}) & = & T_{x_{i}y}, \\
(f_{t})_{*}(T_{z_{i}t}) & = & T_{z_{i}t}-(-T_{z_{i}y}), \\
(f_{t})_{*}(T_{x_{i}t}) & = & T_{x_{i}t}+T_{x_{i}y}, \\
(f_{t})_{*}(-T_{z_{i}y}) & = & (-T_{z_{i}y})
\end{eqnarray*}
for all integers $i$ with $1\leq i\leq g$. Hence we have
\begin{eqnarray*}
(f_{y})_{*}(\sigma) & = & (a_{1}, b_{1}-d_{1}, c_{1}+a_{1}, d_{1}, \dots, a_{g}, b_{g}-d_{g}, c_{g}+a_{g}, d_{g}, e, f), \\
(f_{t})_{*}(\sigma) & = & (a_{1}+c_{1}, b_{1}, c_{1}, d_{1}-b_{1}, \dots, a_{g}+c_{g}, b_{g}, c_{g}, d_{g}-b_{g}, e, f).
\end{eqnarray*}

Let $\sigma$ be $(a_{1}, b_{1}, c_{1}, d_{1}, \dots, a_{g}, b_{g}, c_{g}, d_{g}, e, f)\in \mathbb{Z}^{4g+2}$. For each integer $i$, we denote $(0, \dots, 0, a_{i}, b_{i}, c_{i}, d_{i}, 0, \dots, 0, 0, 0)\in \mathbb{Z}^{4g+2}$ by $\sigma_{i}$. Note that we have $\sigma_{i}^{2}=2(a_{i}b_{i}+c_{i}d_{i})$ and $\sigma\cdot\sigma = \sigma^{2} = \sum_{i = 1}^{g}\sigma_{i}^{2} + 2ef$. For the rest of this paper, we use $a_{1}$, $b_{1}$, \dots, $c_{g}$, $d_{g}$, $e$, $f$ as a dual basis of a basis $T_{x_{1}y}$, $T_{z_{1}t}$, \dots, $T_{x_{g}t}$, $(-T_{z_{g}y})$, $S$, $(-F)$.

\begin{lemma}\label{lemma:1}
For any homology class $\sigma$ in $H_{2}(M)$, there exists a diffeomorphism $h:M\to M$ which satisfies $b_{i}(h_{*}(\sigma))=d_{i}(h_{*}(\sigma))=0$ for all $i\geq 2$.
\end{lemma}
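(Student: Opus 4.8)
The plan is to use the diffeomorphisms $R_\alpha$ (for various primitive classes $\alpha$) to clear out the $b_i$ and $d_i$ coordinates for $i\geq 2$, working one index at a time. Given $\sigma=(a_1,b_1,c_1,d_1,\dots,a_g,b_g,c_g,d_g,e,f)$, fix an index $i\geq 2$ and look at the pair $(\sigma_1,\sigma_i)$ of ``blocks''. The key observation is that the four diffeomorphisms $R_{z_i}$, $R_{x_i}$, $R_{z_i+z_j}$, $R_{z_i+x_j}$ (with $j=1$) act on the relevant coordinates in a way that mimics row/column operations: $R_{z_i*}$ sends $b_i\mapsto b_i-c_i$, $R_{x_i*}$ sends $d_i\mapsto \cdot$ and $a_i\mapsto a_i-d_i$, while $R_{z_i+z_1*}$ and $R_{z_i+x_1*}$ additionally transfer quantities like $c_i$ or $b_i$ into the first block's coordinates $b_1,d_1$. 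So if either $c_i$ or (after using $f_y,f_t$ if needed) $a_i$ is nonzero, one can use a Euclidean-algorithm style sequence of these twists to drive $b_i$ and $d_i$ to $0$ while dumping the ``residue'' into block $1$, which is allowed to be arbitrary.

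More precisely, I would proceed as follows. First reduce to the case where for each $i\geq 2$ the block $\sigma_i$ is nonzero only if it is ``hyperbolic-primitive'' enough: if $\sigma_i=0$ already there is nothing to do for that index. If $\sigma_i\neq 0$, apply $f_{y*}$ and/or $f_{t*}$ (which mix $a_i,c_i$ and $b_i,d_i$ within block $i$ but also touch block $1$'s $a_1,c_1,b_1,d_1$ harmlessly) and the involutions from Section~\ref{section:0} to arrange that $c_i\neq 0$ (this is possible because a nonzero vector in $\mathbb{Z}^4$ cannot have all of $a_i,b_i,c_i,d_i$ killed by the $SL$-type moves available; concretely $\sigma_i^2=2(a_ib_i+c_id_i)$ and the moves act on this $\mathbb{Z}^4$ through a group containing enough of $SL_4(\mathbb{Z})$ to make some coordinate nonzero). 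Then, with $c_i\neq 0$, repeatedly apply $R_{z_i*}$ (which replaces $b_i$ by $b_i-c_i$) to bring $b_i$ into the range $0\le b_i<|c_i|$; then swap the roles of $b_i$ and $c_i$ using $f$-type maps and iterate, as in the Euclidean algorithm, eventually reaching $b_i=0$. Having $b_i=0$, apply $R_{z_i+x_1*}$ or $R_{z_i+z_1*}$ to move what is left so that $d_i$ can similarly be cleared via $R_{x_i*}$ (which sends $a_i\mapsto a_i-d_i$) combined with the Euclidean algorithm, pushing the overflow into $b_1$ and $d_1$. Finally observe that all of these moves leave the coordinates of blocks $j$ with $j\neq 1,i$ untouched, so one can run the procedure for $i=2$, then $i=3$, and so on, composing the resulting diffeomorphisms; the composite $h$ is the desired diffeomorphism.

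The main obstacle I anticipate is bookkeeping: making sure that when clearing block $i$ the moves $R_{z_i+z_1}$, $R_{z_i+x_1}$, etc. do not reintroduce nonzero entries in $b_j,d_j$ for indices $j\geq 2$ already processed. This is why I would insist on always using $j=1$ as the ``sink'' in the two-block moves, and process the indices in increasing order; one should check from the explicit formulas for $R_{z_i+z_j*}$ and $R_{z_i+x_j*}$ that with $j=1$ only block $1$ and block $i$ are affected. A secondary subtlety is the base case of the Euclidean algorithm when $\sigma_i$ is a nonzero vector that happens to be entirely in the ``$b_i,d_i$ directions'' (i.e. $a_i=c_i=0$): here one must first apply an $f$-type or involution move to rotate some weight into $a_i$ or $c_i$ before the algorithm can start, and one should verify such a move exists and affects only blocks $1$ and $i$. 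Once these are handled the argument is a routine descent.
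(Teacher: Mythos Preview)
There is a genuine gap in your strategy, and it is not the bookkeeping issue you anticipate. The obstruction is the invariant $\sigma_i^2=2(a_ib_i+c_id_i)$: every one of the ``block-$i$'' moves you rely on --- $R_{x_i}$, $R_{z_i}$, $f_y$, $f_t$ --- preserves it (a one-line check from the explicit formulas). Hence if $\sigma_i^2\neq 0$ you can \emph{never} reach $b_i=d_i=0$ using only those moves, and your plan to ``clear $b_i$ first, then $d_i$'' cannot succeed as stated. Concretely, once your Euclidean algorithm on $(b_i,c_i)$ via $R_{z_i},R_{x_i}$ produces $b_i=0$, the remaining block reads $(a_i,0,c_i,d_i)$ with $c_id_i=\tfrac12\sigma_i^2\neq 0$; in particular $c_i\neq 0$. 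But every listed move that touches $d_i$ also touches $b_i$: $R_{z_i}$ sends $b_i\mapsto b_i-c_i$, $R_{z_i+z_1}$ sends $b_i\mapsto b_i-c_i-c_1$, and $R_{z_i+x_1}$ sends $b_i\mapsto b_i-c_i+b_1$. So any attempt to reduce $d_i$ immediately re-pollutes $b_i$ by a multiple of $c_i\neq 0$. The ``swap $b_i$ and $c_i$ via $f$-type maps'' step is also not available: $f_y,f_t$ shear $(b_i,c_i)$ against $(a_i,d_i)$, they do not interchange $b_i$ and $c_i$.

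The paper's proof handles exactly this point by inverting the order of operations: it first uses inter-block Dehn twists to force $\sigma_i^2=0$, and only then clears $b_i,d_i$ with local moves (which is now unobstructed). The mechanism is to use $R_{x_j},R_{z_j}$ for $j\in\{1,i\}$ to normalize so that $d_1=d_i=0$, then apply $R_{z_1+z_i}$ to make $d_1=d_i=a_1+a_i$; after this $\gcd(a_1,d_1)=\gcd(a_i,d_i)$, so further local moves equalize $a_1=a_i$ with $d_1=d_i=0$, and a single $R_{z_1+x_i}$ then kills $a_i$ and $d_i$ simultaneously, giving $\sigma_i^2=0$. This is the missing idea in your proposal: the inter-block moves are not a ``sink'' for overflow from an independent reduction of $b_i$ and $d_i$, they are what lets you change the invariant $\sigma_i^2$ in the first place.
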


\begin{proof}
If $\sigma_{i}^{2}=0$ for an integer $i$, we map $\sigma$ to a class with $b_{i}=d_{i}=0$ by using $R_{x_{j}}^{\pm 1}$ and $R_{z_{j}}^{\pm 1}$ repeatedly. Therefore, it suffices to show that there exists a diffeomorphism $h:M\to M$ which satisfies $h_{*}(\sigma)_{i}^{2}=0$ for all $i\geq 2$.

Suppose that $\sigma_{i}^{2}\neq 0$ for an integer $i\geq 2$. By using $R_{x_{j}}^{\pm 1}$ and $R_{z_{j}}^{\pm 1}$ $(j=1,i)$, we map $\sigma$ to a class $\sigma '$ with $d_{1}(\sigma ')=d_{i}(\sigma ')=0$ (cf. the Euclid algorithm). Then we map $\sigma '$ to a class $\sigma ''$ with $a_{1}(\sigma '')=a_{1}(\sigma ')$, $a_{i}(\sigma '')=a_{i}(\sigma ')$ and $d_{1}(\sigma '')=d_{i}(\sigma '')=a_{1}(\sigma ')+a_{i}(\sigma ')$ by using $R_{z_{1}+z_{i}}$. Note that $\gcd (a_{1}(\sigma ''), d_{1}(\sigma ''))=\gcd (a_{i}(\sigma ''), d_{i}(\sigma ''))$.

Then we map $\sigma ''$ to a class $\sigma '''$ with $d_{1}(\sigma ''')=d_{i}(\sigma ''')=0$ and
$a_{1}(\sigma ''')=a_{i}(\sigma ''')$ by using $R_{x_{j}}^{\pm 1}$ and $R_{z_{j}}^{\pm 1}$. We have $a_{i}((R_{z_{1}+x_{i}})_{*}(\sigma '''))=d_{i}((R_{z_{1}+x_{i}})_{*}(\sigma '''))=0$ and hence, $((R_{z_{1}+x_{i}})_{*}(\sigma '''))_{i}^{2}=0$. We conclude that there exists a diffeomorphism $h$ with $h_{*}(\sigma)_{i}^{2}=0$ for all $i\geq 2$.
\end{proof}

\begin{lemma}\label{lemma:2}
For any homology class $\sigma$ in $H_{2}(M)$, there is a diffeomorphism $h:M\to M$ which satisfies $b_{i}(h_{*}(\sigma))=d_{i}(h_{*}(\sigma))=0$ for all $i\geq 2$, $a_{1}(h_{*}(\sigma))\mid b_{1}(h_{*}(\sigma))$, $a_{1}(h_{*}(\sigma))\mid e(h_{*}(\sigma))$ and $c_{1}(h_{*}(\sigma))=d_{1}(h_{*}(\sigma))=0$.
\end{lemma}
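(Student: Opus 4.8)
The plan is to start from the output of Lemma~\ref{lemma:1} and normalise the ``first block'' $(a_1,b_1,c_1,d_1)$ together with $e,f$, using only the diffeomorphisms $R_{z_1}^{\pm1}$, $R_{x_1}^{\pm1}$, $f_y^{\pm1}$, $f_t^{\pm1}$, $D_{x_1y}^{\pm1}$ constructed in Section~\ref{section:0}. First apply Lemma~\ref{lemma:1} to get $h_0$ with $b_i(h_{0*}\sigma)=d_i(h_{0*}\sigma)=0$ for all $i\geq2$, and replace $\sigma$ by $h_{0*}\sigma$. The five diffeomorphisms above act on each block of index $i\geq2$ either trivially ($R_{z_1},R_{x_1},D_{x_1y}$) or by $b_i\mapsto b_i-d_i,\ d_i\mapsto d_i$ (for $f_y$) or by $b_i\mapsto b_i,\ d_i\mapsto d_i-b_i$ (for $f_t$), so the conditions $b_i=d_i=0$ $(i\geq2)$ are preserved by any composition of them; hence it suffices to normalise the tuple $(a_1,b_1,c_1,d_1,e,f)$ under the induced actions, which I would read off from Section~\ref{section:0}. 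Two invariance facts will be used repeatedly: $e$ is fixed by all five maps, and $\delta:=\gcd(a_1,b_1,c_1,d_1,e)$ is invariant (the first four maps are integral changes of basis on $(a_1,b_1,c_1,d_1)$ fixing $e$, while $D_{x_1y}$ replaces $d_1$ by $d_1+e$, which leaves the gcd with $e$ unchanged).

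If $\delta=0$ then $a_1=b_1=c_1=d_1=e=0$ and every condition in the statement already holds for $h_{0*}\sigma$; so assume $\delta\neq0$. The crux is to move $\sigma$ to a class with $|a_1|=\delta$. Granting this, $a_1=\pm\delta$ divides each of $b_1,c_1,d_1,e$, and then $f_y^{-c_1/a_1}$ moves us to $c_1=0$ (changing $b_1$ only by the multiple $(c_1/a_1)d_1$ of $a_1$, and leaving $d_1$), after which $R_{z_1}^{-d_1/a_1}$ moves us to $d_1=0$ (leaving $c_1=0$, and, since $c_1=0$, leaving $b_1$ fixed). The resulting class satisfies $c_1=d_1=0$, $a_1\mid b_1$, $a_1\mid e$, and still $b_i=d_i=0$ $(i\geq2)$; the composition of all diffeomorphisms used is the required $h$.

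To reach $|a_1|=\delta$, argue by descent on $|a_1|$ among orbit representatives with $a_1\neq0$. This set is nonempty: from $a_1=0$ (where, since $\delta\neq0$, not all of $b_1,c_1,d_1,e$ vanish), apply $R_{x_1}$ if $d_1\neq0$; $f_t^{\pm1}$ if $d_1=0\neq c_1$; $f_t$ then $R_{x_1}$ if $c_1=d_1=0\neq b_1$; $D_{x_1y}$ then $R_{x_1}$ if $b_1=c_1=d_1=0\neq e$. Take a representative with $a_1\neq0$ minimising $|a_1|$, and suppose $|a_1|>\delta$; then $a_1$ fails to divide one of $b_1,c_1,d_1,e$ (otherwise $\delta$ would equal $|a_1|$). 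If $a_1\nmid c_1$, running the Euclidean algorithm on $(a_1,c_1)$ via the $SL_2(\mathbb{Z})$-action that $\{f_y,f_t\}$ realises there yields a representative with $0<|a_1|=\gcd(a_1,c_1)<$ the previous value, a contradiction; symmetrically if $a_1\nmid d_1$, using the $SL_2(\mathbb{Z})$-action of $\{R_{x_1},R_{z_1}\}$ on $(a_1,d_1)$. If $a_1$ divides $c_1$ and $d_1$ but not $b_1$: kill $c_1$ by $f_y^{-c_1/a_1}$ without changing $a_1$; then $f_t$ sends $d_1\mapsto d_1-b_1$ without changing $a_1$ (as now $c_1=0$), and $a_1\nmid d_1-b_1$, reducing to the previous case. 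If $a_1$ divides $b_1,c_1,d_1$ but not $e$: kill $c_1$ and then $d_1$ (without changing $a_1$), apply $D_{x_1y}$ to replace $d_1$ by $e$, and again $a_1\nmid d_1$. In every case we contradict minimality, so $|a_1|=\delta$.

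I do not expect a single hard step, but rather that the main care is in the bookkeeping of which normalisations are in force at each moment. The decisive point is that after $c_1$ has been cleared the ``feed-in'' move $f_t$ (sending $d_1\mapsto d_1-b_1$) and, after $d_1$ is also cleared, $D_{x_1y}$ (sending $d_1\mapsto d_1+e$) no longer disturb $a_1$, which is exactly what lets the Euclidean descent push $|a_1|$ down to $\delta=\gcd(a_1,b_1,c_1,d_1,e)$; one must also be mildly careful, when running the Euclidean algorithm, not to leave $a_1$ equal to $0$. The remaining verifications — the effects of $f_y^{-c_1/a_1}$ and $R_{z_1}^{-d_1/a_1}$ on $b_1$, and that none of the moves reintroduces nonzero $b_i$ or $d_i$ for $i\geq2$ — are immediate from the formulas in Section~\ref{section:0}.
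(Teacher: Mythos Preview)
Your proof is correct and follows essentially the same approach as the paper: start from Lemma~\ref{lemma:1}, then normalise the first block together with $e$ using only $R_{x_1}^{\pm1},R_{z_1}^{\pm1},f_y^{\pm1},f_t^{\pm1},D_{x_1y}^{\pm1}$, the key point in both being that $D_{x_1y}$ feeds $e$ into $d_1$ so that $\gcd(a_1,b_1,c_1,d_1,e)$ becomes reachable as $|a_1|$. The only difference is organisational: the paper first clears $d_1$, applies $D_{x_1y}$ once, and then appeals to the Smith normal form of the $2\times2$ block under the $R_{x_1},R_{z_1},f_y,f_t$ action, whereas you unpack that last step into an explicit descent on $|a_1|$ with case analysis---your version is longer but entirely self-contained.
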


\begin{proof}
By  Lemma \ref{lemma:1}, we may suppose that $\sigma$ satisfies $b_{i}(\sigma)=d_{i}(\sigma)=0$ for all $i\geq 2$.

Furthermore we may suppose that $d_{1}(\sigma)=0$ by using $R_{x_{1}}^{\pm 1}$ or $R_{z_{1}}^{\pm 1}$ repeatedly. We map $\sigma$ to a class $\sigma'$ with $d_{1}(\sigma ')=e(\sigma ')$ by using $D_{x_{1}y}$. Then we have
\[\gcd(a_{1}(\sigma '), b_{1}(\sigma '), c_{1}(\sigma '), d_{1}(\sigma '))=\gcd(a_{1}(\sigma '), b_{1}(\sigma '), c_{1}(\sigma '), d_{1}(\sigma '), e(\sigma ')).\]

Now, we use the following procedures to map $\sigma'$ to a new class denoted by the same symbol $\sigma'$.
\begin{enumerate}[$(a)$]
\item \ Map $\sigma'$ to a class with $c_{1}(\sigma') = 0$ by using $f_{y}^{\pm 1}$ and $f_{t}^{\pm 1}$ repeatedly.
\item \ Map $\sigma'$ to a class with $d_{1}(\sigma') = 0$ by using $R_{x_{1}}^{\pm 1}$ and $R_{z_{1}}^{\pm 1}$ repeatedly.
\item \ If $c_{1}(\sigma') = d_{1}(\sigma') = 0$, map $\sigma'$ to a class with $c_{1}(\sigma') = b_{1}(\sigma')$ by using $R_{x_{1}}$.
\end{enumerate}
Using the procedure $(a)$ reduces the value $|a_{1}(\sigma')|$ except for the case $a_{1}(\sigma')\mid c_{1}(\sigma')$. The same thing holds for the procedure $(b)$. Hence, we have a class with $c_{1}(\sigma') = d_{1}(\sigma') = 0$ after finite times reduction by procedures $(a)$ and $(b)$. If $a_{1}(\sigma')\mid \hspace{-.69em}/ b_{1}(\sigma')$, after using the procedure $(c)$, we may reduce $|a_{1}(\sigma')|$ again and map to a class with $c_{1}(\sigma') = d_{1}(\sigma') = 0$. Finally we have a class $\sigma''$ with $c_{1}(\sigma'') = d_{1}(\sigma'') = 0$ and $a_{1}(\sigma'')\mid b_{1}(\sigma'')$.

Since these procedures preserve $\gcd(a_{1}(\sigma '), b_{1}(\sigma '), c_{1}(\sigma '), d_{1}(\sigma '))$ and $e(\sigma')$, we also have $a_{1}(\sigma'')\mid e(\sigma'')$.
\end{proof}


\subsection{Circle sum operations}\label{section:1}

In this subsection we explain the circle sum operation which makes a connected closed surface from two connected closed surfaces with positive genera. For details see Bang-He Li and Tian-Jun Li~\cite{bt1}.
Let $W$ be a $4$-manifold and let $\Sigma$ and $\Sigma'$ be closed oriented surfaces of positive genera disjointly embedded in $W$. Suppose that there is an embedded annulus $q:I\times S^{1}\to W$ with the following conditions:
\begin{itemize}
\item $q(I\times S^{1})\cap\Sigma=q(\{0\}\times S^{1})$ is homologically nontrivial in $H_{1}(\Sigma)$.
\item $q(I\times S^{1})\cap\Sigma'=q(\{1\}\times S^{1})$ is homologically nontrivial in $H_{1}(\Sigma')$.
\item There is a vector field $V$ on  $q(I\times S^{1})\subset W$ such that $V$ is not tangent to $q(I\times S^{1})$ at each point and tangent to $\Sigma$ and $\Sigma'$ on the boundary.
\end{itemize}
Take a parallel copy $q':I\times S^{1}\to W$ of the embedded annulus $q:I\times S^{1}\to W$ in accordance with the vector field $V$. We may suppose that
\begin{itemize}
\item $q'(I\times S^{1})\cap\Sigma=q'(\{0\}\times S^{1})$.
\item $q'(I\times S^{1})\cap\Sigma'=q'(\{1\}\times S^{1})$.
\item $q(I\times S^{1})\cap q'(I\times S^{1})=\emptyset$.
\end{itemize}
Remove open annuli enclosed by $q(\{0\}\times S^{1})$ and $q'(\{0\}\times S^{1})$ from $\Sigma$ and ones enclosed by $q(\{1\}\times S^{1})$ and $q'(\{1\}\times S^{1})$ from $\Sigma'$. Connect these two remaining surfaces via embedded annuli and after smoothing, we have an embedded closed oriented surface of genus $g(\Sigma)+g(\Sigma')-1$ representing $\pm([\Sigma]+[\Sigma'])$ or $\pm([\Sigma]-[\Sigma'])\in H_{2}(W)$. Note that the vector field $V$ can be identified with a section of the normal bundle $\cong I\times S^{1}\times\mathbb{C}$ and under this identification, we have another vector field $V'$ satisfying the same properties as above, given by $V'(s, t)=e^{i\pi s}V(s, t)$. Then the circle sum with respect to $V'$ gives an embedded closed oriented surface of genus $g(\Sigma)+g(\Sigma')-1$ representing $\pm([\Sigma]-[\Sigma'])$ or $\pm([\Sigma]+[\Sigma'])\in H_{2}(W)$. Hence we do not have to worry about a sign seriously.

\begin{example} \label{ex:1}
Two loops in Figure \ref{figure:2} represent embedded tori $T_{1}:T^{2}\to\Sigma_{2}\times T^{2}:(s, t)\mapsto (x_{1}(s), t, 0)$ and $T_{2}:T^{2}\to\Sigma_{2}\times T^{2}:(s, t)\mapsto (x_{2}(s), t, 0)$ and the path $\gamma:I\to \Sigma_{2}$ represents an embedded annulus $I\times S^{1}\to\Sigma_{2}\times T^{2}:(s, t)\mapsto(\gamma(s),t , 0)$.  Then we take a vector field $V$ satisfying the above condition by $V(s, t)=(v(s), t, 0)$, where we identified $\gamma(I)\subset\Sigma_{2}$ with $I$ and $v$ is a vector field on $I\subset\Sigma_{2}$ which is transverse to $I$ and tangent to $x_{1}$ and $x_{2}$ as in Figure 2. Hence we can perform the circle sum operation between $T_{1}$ and $T_{2}$. 

\begin{figure}[h]
\centering
\includegraphics[width=8cm]{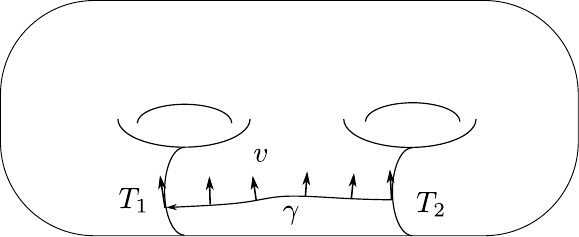}
\caption{\label{figure:2}}
\end{figure}
\end{example}

\begin{example} \label{ex:2}
We may take a circle sum operation between a torus $T_{1}$ defined as in Example \ref{ex:1} and a section $\Sigma_{g}\times \{(0, \tfrac{1}{2})\}\subset\Sigma_{g}\times T^{2}$. Take a path $\gamma : I\to T^{2} : t\mapsto (0, \tfrac{t}{2})$ and a vector field $V$ on the annulus $S^{1}\times I$ $($which is given by $x_{1}\times \gamma$$)$ by
\[S^{1}\times I\to \mathbb{R}^{4} : (x, t)\mapsto \left(0, \sin \dfrac{\pi t}{2}, \cos \dfrac{\pi t}{2}, 0\right),\]
where we have identified a neighborhood $x_{1}\times I$ of $x_{1}\subset \Sigma_{g}$ with $S^{1}\times I$. The annulus and the vector field $V$ satisfies the required conditions and we may perform the circle sum operation.
\end{example}

\begin{example} \label{ex:3}
Take real numbers $0 < q_{1} < \dots < q_{n} < 1$ and $n - 1$ distinctive points $t_{1}, \dots, t_{n - 1}\in S^{1}$. For parallel embedded tori $T^{2}\times \{q_{1}, \dots, q_{n}\}\times \{\tfrac{1}{2}\}\subset T^{2}\times I\times I$, take an annulus $S^{1}\times \{t_{i}\}\times [q_{i}, q_{i + 1}]\times \{\tfrac{1}{2}\}$ and a vector field $V_{i}$ on $S^{1}\times [q_{i}, q_{i + 1}]$ given by
\[S^{1}\times [q_{i}, q_{i + 1}]\to \mathbb{R}^{4} : (x, t)\mapsto (0, 1, 0, 0)\]
for each $1 \leq i \leq n - 1$. These annuli and vector fields satisfy required conditions and we may perform circle sum operations for parallel embedded tori.

Note that, if we take $n - 1$ distinctive points $t_{1}, \dots, t_{n - 1}\in S^{1}$ in a sufficiently small interval $I'$ of $S^{1}$, circle sum operations do not affect the original embedded tori except for the sufficiently small region $S^{1}\times I'\times I^{2}$.

\end{example}


\subsection{The generalized adjunction inequality}
We use the following theorems to get lower bounds for the minimal genus function.

\begin{theorem}[Kronheimer-Mrowka~\cite{km}]\label{thm:km}
Let $W$ be a closed $4$-manifold with $b_{2}^{+}(W)\geq 2$ and let $\Sigma\subset W$ be an embedded connected closed surface of genus $g(\Sigma)$ with $[\Sigma]^{2}\geq 0$ and $[\Sigma]\neq 0$. Then we have\[2g(\Sigma)-2\geq [\Sigma]^{2}+|[\Sigma]\cdot K| \] for any Seiberg-Witten basic class $K$.
\end{theorem}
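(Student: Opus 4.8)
The plan is to deduce this from Seiberg--Witten theory, and the first move is to reduce to the case $[\Sigma]^{2}=0$ by blowing up. Suppose $[\Sigma]^{2}=m>0$ and blow up $W$ at $m$ distinct points of $\Sigma$, obtaining $\widehat W=W\#m\,\overline{\mathbb{CP}}^{2}$ with exceptional classes $E_{1},\dots,E_{m}$; the proper transform $\widehat\Sigma$ is smooth of the same genus, $[\widehat\Sigma]=\pi^{*}[\Sigma]-E_{1}-\cdots-E_{m}$, and $[\widehat\Sigma]^{2}=0$. By the blow-up formula for Seiberg--Witten invariants, $\widehat K:=K+\varepsilon_{1}E_{1}+\cdots+\varepsilon_{m}E_{m}$ is a Seiberg--Witten basic class of $\widehat W$ for every choice of $\varepsilon_{i}\in\{\pm1\}$, while $b_{2}^{+}(\widehat W)=b_{2}^{+}(W)\geq 2$. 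Since $\widehat K\cdot[\widehat\Sigma]=K\cdot[\Sigma]+\varepsilon_{1}+\cdots+\varepsilon_{m}$, choosing the $\varepsilon_{i}$ to align with the sign of $K\cdot[\Sigma]$ gives $|\widehat K\cdot[\widehat\Sigma]|=|K\cdot[\Sigma]|+m=[\Sigma]^{2}+|K\cdot[\Sigma]|$. Hence it suffices to prove the inequality when $[\Sigma]^{2}=0$.

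When $[\Sigma]^{2}=0$ the normal bundle of $\Sigma$ is trivial, so a tubular neighbourhood is $\Sigma\times D^{2}$ with boundary $Y:=\Sigma\times S^{1}$. I would fix a product metric $h$ on $Y$ (taking a constant-curvature metric on the $\Sigma$-factor when $g(\Sigma)\geq 2$) and put on $W$ a family of metrics $g_{T}$ that contain an isometrically embedded cylinder $\left([-T,T]\times Y,\,dt^{2}+h\right)$, letting $T\to\infty$. Because $b_{2}^{+}(W)\geq 2$, the Seiberg--Witten invariant is a well-defined diffeomorphism invariant, independent of the metric and of a small perturbation, so $SW_{W}(\mathfrak s)\neq 0$ (where $c_{1}(\mathfrak s)=K$) yields, for every $T$ and every small generic perturbation, an actual solution $(A_{T},\Phi_{T})$ of the perturbed Seiberg--Witten equations on $(W,g_{T})$.

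The analytic heart is a neck-stretching and compactness step. Using the a priori $C^{0}$ bound on the spinor coming from the Weitzenböck formula together with Uhlenbeck compactness, one extracts a subsequence whose restrictions to the lengthening cylinder converge, in $C^{\infty}_{\mathrm{loc}}$ and after gauge transformation, to a finite-energy solution on $\mathbb R\times Y$; since the energy is forced onto the neck and then spread over an arbitrarily long cylinder, the limit is translation-invariant and hence descends to a solution of the three-dimensional Seiberg--Witten equations on $Y=\Sigma\times S^{1}$ for the spin$^{c}$ structure $\mathfrak t=\mathfrak s|_{Y}$, which by naturality satisfies $\langle c_{1}(\mathfrak t),[\Sigma]\rangle=K\cdot[\Sigma]$. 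One then invokes the vanishing theorem on $\Sigma\times S^{1}$: for the product metric and a sufficiently small perturbation, a Weitzenböck computation bounds the curvature of $A$ pointwise by $|\Phi|^{2}$ and, integrating over $\Sigma$, forces the degree over $\Sigma$ of the line bundle underlying $\mathfrak t$ into the range dictated by $\chi(\Sigma)$; in particular no solution exists once $|\langle c_{1}(\mathfrak t),[\Sigma]\rangle|>2g(\Sigma)-2$. Combined with the blow-up reduction this gives $[\Sigma]^{2}+|K\cdot[\Sigma]|\leq 2g(\Sigma)-2$.

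The main obstacle is precisely this limiting analysis: one must prevent energy from escaping into broken trajectories along the neck, handle the possibility that the limiting three-dimensional solution is reducible (which corresponds to $K\cdot[\Sigma]=0$, where the asserted inequality is immediate once $g(\Sigma)\geq 1$), and keep the perturbations consistent across the neck; and one must establish the vanishing theorem on $\Sigma\times S^{1}$ with the sharp slope $2g(\Sigma)-2$, which is where the product geometry and the hypotheses $[\Sigma]\neq 0$ and $g(\Sigma)\geq 1$ enter — in the boundary case $g(\Sigma)=0$ the statement becomes the non-existence of a homologically essential square-zero sphere, delivered by the same analysis together with the fact that $S^{2}\times S^{1}$ carries no irreducible monopole. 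Everything upstream — the blow-up formula, metric-independence of $SW$ under $b_{2}^{+}\geq 2$, and the existence of a monopole from $SW_{W}(\mathfrak s)\neq 0$ — is standard once these two ingredients are in place.
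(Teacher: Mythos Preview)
The paper does not prove this theorem at all: it is quoted as a black-box result from Kronheimer--Mrowka~\cite{km} and then applied (together with Taubes' theorem) to obtain the lower bound in Corollary~2.5. So there is no ``paper's own proof'' to compare against.

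That said, your outline is the standard Kronheimer--Mrowka strategy: reduce to the square-zero case by blowing up and invoking the blow-up formula, stretch the neck along $\Sigma\times S^{1}$, use metric-independence of the invariant when $b_{2}^{+}\geq 2$ to produce monopoles for every neck length, and pass to a three-dimensional limit on $\Sigma\times S^{1}$ where a curvature estimate forces $|\langle c_{1}(\mathfrak t),[\Sigma]\rangle|\leq 2g(\Sigma)-2$. The ingredients you name (blow-up reduction, compactness on the neck, the vanishing theorem on $\Sigma\times S^{1}$) and the obstacles you flag (broken trajectories, reducible limits, consistent perturbations) are exactly the technical content of the original proof. As written it is an honest sketch rather than a proof --- the neck-stretching compactness and the sharp vanishing on $\Sigma\times S^{1}$ each require substantial analysis --- but nothing in the approach is wrong, and it matches what the cited reference does.
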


\begin{theorem}[Taubes~\cite{c}]
Let $(W, \omega)$ be a closed symplectic $4$-manifold with $b_{2}^{+}(W)\geq 2$. Then the first Chern class $c_{1}(\omega)$ of the associated complex structure on $W$ has Seiberg-Witten invariant equal to $\pm 1$.
\end{theorem}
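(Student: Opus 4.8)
This is Taubes' theorem that the canonical $\mathrm{Spin}^{c}$ structure of a closed symplectic $4$--manifold has Seiberg--Witten invariant $\pm1$; its proof is substantial, so I will only outline the strategy one would follow. First I would fix an almost complex structure $J$ compatible with $\omega$ together with an adapted Riemannian metric $g$. These data single out the canonical $\mathrm{Spin}^{c}$ structure $\mathfrak{s}_{\mathrm{can}}$, with positive spinor bundle $W^{+}\cong\Lambda^{0,0}\oplus\Lambda^{0,2}$ and determinant line $\overline{K_{W}}$, so that $c_{1}(\mathfrak{s}_{\mathrm{can}})=c_{1}(W,J)=c_{1}(\omega)$. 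I would then study the Seiberg--Witten equations $D_{A}\Phi=0$, $F_{A}^{+}=q(\Phi)-i\,r\,\omega$ perturbed by a large multiple $r\gg0$ of the symplectic form. Since $b_{2}^{+}(W)\ge2$, the signed count of solutions is independent of the metric and of such a perturbation, so for every $r$ it equals $SW(\mathfrak{s}_{\mathrm{can}})$, and we may let $r\to\infty$.

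The next step is to exhibit a distinguished solution: writing $\Phi=(\alpha,\beta)$ with $\alpha$ a section of the trivial bundle $\Lambda^{0,0}$ and $\beta$ a section of $\Lambda^{0,2}$, the pair $\alpha\equiv\sqrt{r}$, $\beta\equiv0$, together with the connection on $\overline{K_{W}}$ whose curvature term reproduces $-i\,r\,\omega$, solves the perturbed equations. The technical core is then an a priori estimate: via the Weitzenb\"ock formula for $D_{A}^{*}D_{A}$, the closedness of $\omega$, and the explicit algebra of Clifford multiplication by $\omega$ on $W^{+}$, one shows that for $r$ large every solution satisfies $|\alpha|^{2}\le r+O(1)$, $|\beta|=O(r^{-1/2})$, and $|\alpha|$ bounded below away from $0$. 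Since the zero set of $\alpha$ would be Poincar\'e dual to the first Chern class of the trivial bundle $\Lambda^{0,0}$, hence empty, $\alpha$ is nowhere vanishing, and a gauge-fixing argument then identifies any solution with the distinguished one.

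Finally I would verify that the distinguished solution is non-degenerate, by checking that the linearization of the perturbed equations at it has trivial kernel and cokernel --- the positivity of the term $|\alpha|^{2}\sim r$ in the relevant Weitzenb\"ock identity forces this for $r$ large. Hence the perturbed moduli space is a single transversally cut out point, so $SW(\mathfrak{s}_{\mathrm{can}})=\pm1$, the sign being pinned down by the chosen homology orientation. The main obstacle is plainly the a priori estimate together with the attendant compactness analysis as $r\to\infty$: ruling out concentration or bubbling and, above all, controlling the zero set of $\alpha$ is where the real work lies, while the construction of the model solution, its non-degeneracy, and the final bookkeeping are comparatively formal once those estimates are in hand.
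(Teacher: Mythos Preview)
The paper does not prove this theorem at all: it is quoted as Taubes' result with a citation to \cite{c} and then immediately applied. There is therefore nothing in the paper to compare your argument against; the author treats it as a black box.

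That said, your outline is a faithful sketch of Taubes' actual proof. The identification of the canonical $\mathrm{Spin}^{c}$ structure, the large-$r$ perturbation by $-ir\omega$, the distinguished solution $(\alpha,\beta)=(\sqrt{r},0)$, the a priori estimates forcing $\alpha$ to be nowhere zero and $\beta$ small, and the nondegeneracy of the linearization are exactly the ingredients of the original argument. One small inaccuracy: the reason $\alpha$ is nowhere vanishing is not that its zero set is Poincar\'e dual to $c_{1}$ of a trivial bundle (for an arbitrary section that need not hold), but rather that the Weitzenb\"ock-type estimates directly yield a pointwise lower bound $|\alpha|^{2}\ge r - C$ for $r$ large. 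With that correction your summary is sound, though of course the hard analysis you flag as ``the real work'' is indeed substantial and is what fills Taubes' paper.
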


Let $\omega_{1}$ be a volume form of $\Sigma_{g}$ and let $\omega_{2}$ be a volume form of $T^{2}$. Then $\omega=P_{1}^{*}\omega_{1}-P_{2}^{*}\omega_{2}$ is a symplectic form on $M$, where $P_{1}:M\to\Sigma_{g}$ and $P_{2}:M\to T^{2}$ are projections. We have a lower bound by applying the above theorems to this symplectic structure. Note that, since the associated complex structure on $M$ is the product of the associated complex structure on each components up to isomorphism, we have \[c_{1}(\omega)=P_{1}^{*}c_{1}(\omega_{1})-P_{2}^{*}c_{1}(\omega_{2})=P_{1}^{*}c_{1}(\omega_{1}).\]

\begin{corollary}
For any class $\sigma\in H_{2}(M)\setminus\{0\}$ and any embedded connected closed surface $\Sigma\subset M$ with $[\Sigma]=\sigma$, we have \[g(\Sigma)\geq 1+\dfrac{1}{2}|\sigma\cdot\sigma|+(g-1)|F\cdot\sigma|.\]
\end{corollary}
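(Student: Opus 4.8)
The plan is to combine the two theorems just stated. By Taubes' theorem applied to the symplectic form $\omega = P_1^*\omega_1 - P_2^*\omega_2$ on $M$, the class $K := -c_1(\omega) = -P_1^*c_1(\omega_1)$ is a Seiberg-Witten basic class of $M$ (the sign is a convention; both $\pm c_1(\omega)$ are basic since the SW invariant is $\pm 1$). Since $g \geq 1$ we have $b_2^+(M) = 2g+1 \geq 3 \geq 2$, so the hypotheses of Theorem \ref{thm:km} are met. First I would therefore need to identify $K = -P_1^*c_1(\omega_1)$ as an explicit element of $H^2(M)$, or rather its Poincar\'e dual in $H_2(M)$: since $c_1(\omega_1) = c_1(T\Sigma_g) = (2-2g)[\mathrm{pt}]^*$ on the surface, pulling back and dualizing gives $\mathrm{PD}(K) = (2g-2)F$, because the fiber $F = [\{*\}\times T^2]$ is Poincar\'e dual to the pullback of the point class on $\Sigma_g$. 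Hence $\sigma \cdot K = (2g-2)(\sigma \cdot F)$ under the intersection pairing.

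Next I would handle the sign issue in $[\Sigma]^2$. Theorem \ref{thm:km} requires $[\Sigma]^2 \geq 0$, but the corollary is stated with $|\sigma\cdot\sigma|$, so I must also treat classes with negative self-intersection. The standard device is to pass to the opposite orientation: if $[\Sigma]^2 < 0$ for $\Sigma$ representing $\sigma$, then the same underlying surface with reversed orientation represents $-\sigma$, has the same genus, and has self-intersection $(-\sigma)\cdot(-\sigma) = \sigma\cdot\sigma < 0$ still — so that does not help directly. Instead I would use that the manifold $M$ with the \emph{opposite} orientation is again diffeomorphic to a product $\Sigma_g\times T^2$ (reverse orientation on the $T^2$ factor, say), and under orientation reversal of $M$ the intersection form changes sign while genus and the basic-class structure transform accordingly; applying Theorem \ref{thm:km} in $\overline{M}$ to a class of nonnegative self-intersection there gives the bound with the other sign. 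Combining the two cases yields $2g(\Sigma) - 2 \geq |\sigma\cdot\sigma| + (2g-2)|\sigma\cdot F|$, and dividing by $2$ gives exactly $g(\Sigma) \geq 1 + \tfrac12|\sigma\cdot\sigma| + (g-1)|F\cdot\sigma|$.

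One subtlety to address is the case $\sigma^2 = 0$ and $\sigma \cdot F = 0$: then the inequality reads $g(\Sigma) \geq 1$, which Theorem \ref{thm:km} delivers provided $\sigma \neq 0$, so nothing extra is needed beyond noting $\sigma \neq 0$ is in the hypothesis. I expect the main obstacle to be the bookkeeping around orientations and the identification of the dual basic class: one must be careful that $K = -c_1(\omega)$ (not $+c_1(\omega)$) is the basic class whose associated inequality is non-vacuous, that $\mathrm{PD}(-c_1(\omega)) = (2g-2)F$ with the correct sign under the chosen orientation $T_{x_1y}\cdot T_{z_1t} = 1$, and that reversing orientation to capture $|\sigma\cdot\sigma|$ does not secretly change $|\sigma\cdot F|$. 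All of these are routine once set up, but they are exactly the kind of sign checks that are easy to get wrong, so I would verify them against the explicit basis and intersection form $H^{\oplus 2g+1}$ recorded above.
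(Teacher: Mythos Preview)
Your proposal is correct and follows essentially the same route as the paper: identify $c_1(\omega)=P_1^*c_1(\omega_1)$ so that the basic-class term contributes $(2g-2)|F\cdot\sigma|$, apply the Kronheimer--Mrowka inequality, and invoke an orientation-reversing self-diffeomorphism of $M$ (your ``reverse the $T^2$ factor'') to cover the case $\sigma^2<0$. The paper's version is terser and records the pairing of $c_1(\omega)$ against the basis elements rather than phrasing it via $\mathrm{PD}(K)=(2g-2)F$, but the content is the same.
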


\begin{proof}
Since $c_{1}(\omega)=P_{1}^{*}c_{1}(\omega_{1})$, we have $c_{1}(\omega)(T_{uv}) = 0$ for all $u\in H_{1}(\Sigma_{g})$ and $v \in H_{1}(T^{2})$, $c_{1}(\omega)(S)=2-2g$ and $c_{1}(\omega)(F)=0$. By the above theorems, we have\[2g(\Sigma)-2\geq |\sigma\cdot\sigma|+(2g-2)|F\cdot\sigma|\] for any classes with $\sigma^{2}\geq 0$. Since $M$ has an orientation reversing self-diffeomorphism, we may apply Theorem \ref{thm:km} to $M$ with the opposite orientation and obtain the inequality for any classes with $\sigma^{2}\leq 0$.
\end{proof}

We need the following lemma to improve this lower bound.

\begin{lemma}\label{lemma:3}
Let $g\geq 2$ and let $\phi:T^{2}\to M$ be a continuous map. Then, there are $u\in H_{1}(\Sigma_{g})$, $v\in H_{1}(T^{2})$ and $n\in \mathbb{Z}$ such that \[\sigma:=\phi_{*}[T^{2}]=u\otimes v+n(-F)\in H_{2}(M).\]
\end{lemma}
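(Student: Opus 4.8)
The plan is to understand the image $\phi_*[T^2] \in H_2(M)$ by analyzing the map $\phi : T^2 \to M = \Sigma_g \times T^2$ one factor at a time, using that $\pi_1(\Sigma_g)$ and $\pi_1(T^2)$ are well-understood and $\pi_2(\Sigma_g) = 0$ when $g \geq 1$. Write $P_1 : M \to \Sigma_g$ and $P_2 : M \to T^2$ for the two projections, and set $\phi_1 = P_1 \circ \phi : T^2 \to \Sigma_g$ and $\phi_2 = P_2 \circ \phi : T^2 \to T^2$. Under the K\"unneth decomposition $H_2(M) \cong (H_1(\Sigma_g) \otimes H_1(T^2)) \oplus H_2(\Sigma_g) \oplus H_2(T^2)$, the class $\sigma = \phi_*[T^2]$ has: an $H_2(T^2)$-component equal to $\deg(\phi_2)\cdot F$; an $H_2(\Sigma_g)$-component that must vanish because $g \geq 2$ forces $\phi_{1*}[T^2] = 0$ in $H_2(\Sigma_g)$ (any map $T^2 \to \Sigma_g$ has degree zero, since a nonzero-degree map would force $\chi(T^2) = 0 \geq \chi(\Sigma_g) < 0$ via the standard inequality, contradiction — more carefully, $\phi_1$ lifts the nonabelian part only if $\phi_{1*}$ on $\pi_1$ has image of finite index, which is impossible by a rank/Euler-characteristic count); and an $H_1(\Sigma_g) \otimes H_1(T^2)$-component which is the object I actually need to pin down.

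The key step is therefore to show that the $H_1(\Sigma_g) \otimes H_1(T^2)$-component of $\sigma$ is a \emph{decomposable} (rank-one) tensor $u \otimes v$, rather than a general element of the tensor product. For this I would use that $\phi_1 : T^2 \to \Sigma_g$ is homotopic to a map that factors through a circle: since $g \geq 2$, the induced map $\phi_{1*} : \mathbb{Z}^2 = \pi_1(T^2) \to \pi_1(\Sigma_g)$ has image an abelian subgroup of the surface group, hence either trivial or infinite cyclic (abelian subgroups of $\pi_1(\Sigma_g)$ for $g\geq 2$ are cyclic). Consequently $\phi_1$ is homotopic to a composition $T^2 \xrightarrow{p} S^1 \xrightarrow{\iota} \Sigma_g$, where $p$ is a linear projection (up to homotopy) and $\iota$ represents a class $u \in H_1(\Sigma_g)$. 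Then $\phi \simeq (\iota \circ p) \times \phi_2$ up to homotopy onto its relevant homology, and I can compute: the $H_1 \otimes H_1$-part of $(\iota\circ p, \phi_2)_*[T^2]$ is $u \otimes \phi_{2*}(p_*[T^2]\text{-dual})$... concretely, choosing a basis $e_1, e_2$ of $H_1(T^2)$ with $p_*e_2 = 0$, the cross term is $u \otimes \phi_{2*}(e_1)$, a decomposable tensor. Setting $v = \phi_{2*}(e_1) \in H_1(T^2)$ and $n = -\deg(\phi_2)$ (with the sign convention matching $-F$) gives $\sigma = u \otimes v + n(-F)$.

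The main obstacle is the middle claim — that $\phi_1 : T^2 \to \Sigma_g$ factors up to homotopy through $S^1$, and that this factorization interacts correctly with the second factor to keep the tensor term decomposable. The factorization through $S^1$ itself is standard (the image in $\pi_1$ is cyclic, and $S^1 = K(\mathbb{Z},1)$, $\Sigma_g = K(\pi_1(\Sigma_g),1)$ are aspherical, so maps are determined up to homotopy by $\pi_1$), but one must be careful that after homotoping $\phi_1$ we can \emph{simultaneously} keep track of $\phi_2$; the clean way is to note we only need equality of homology classes, and $\phi_*[T^2]$ depends only on the homotopy class of $\phi$, so I may freely replace $\phi$ by $(\iota \circ p) \times \phi_2 : T^2 \times T^2 \to \Sigma_g \times T^2$ restricted along the diagonal $T^2 \hookrightarrow T^2 \times T^2$ — wait, more precisely, by the map $t \mapsto (\iota(p(t)), \phi_2(t))$, which is homotopic to $\phi$ — and then the cross term is computed directly from the naturality of the cross product in homology. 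A degenerate subcase to handle separately: if $\phi_{1*}$ on $\pi_1$ is trivial, then $\phi_1$ is null-homotopic, the tensor term vanishes, and $\sigma = n(-F)$, which is of the required form with $u = 0$. I expect the write-up to spend most of its effort making the ``cyclic image $\Rightarrow$ factors through $S^1$'' step and the homology bookkeeping of the cross term precise.
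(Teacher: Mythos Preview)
Your proposal is correct and follows essentially the same strategy as the paper: both use that an abelian subgroup of $\pi_1(\Sigma_g)$ ($g\geq 2$) is cyclic, arrange one circle factor of $T^2$ to map trivially into $\Sigma_g$, and then invoke asphericity to replace $\phi$ by a product-type map whose $H_1\otimes H_1$ contribution is visibly decomposable. The packaging differs slightly---the paper builds an explicit concatenation $\psi$ and compares it to a product map $\bar\psi$ via $\pi_2(M)=0$, whereas you factor $\phi_1$ through $S^1$ using the $K(\pi,1)$ property and read off the K\"unneth components---but the content is the same; just be careful with the index when you name $v$ (with your convention $p_*e_2=0$ the cross term is $\phi_{1*}e_1\otimes\phi_{2*}e_2$, so $v=\phi_{2*}e_2$, not $\phi_{2*}e_1$).
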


\begin{proof}
Let $P_{1}:M\to\Sigma_{g}$ and $P_{2}:M\to T^{2}$ be projections to each component. Since $g\geq 2$, we have $\mathrm{Im}(P_{1*}\circ\phi_{*}:\pi_{1}(T^{2})\to\pi_{1}(\Sigma_{g}))\cong\mathbb{Z} \mbox{ or } \{1\}$. We may assume that $P_{1*}\circ\phi_{*}[\{0\}\times S^{1}]=1\in\pi_{1}(\Sigma_{g})$, $\phi(\{0\}\times S^{1})\subset\{*\}\times T^{2}$ and $\phi(0, 0)=(*, 0, 0)$.

Let $\theta_{1}=\phi_{*}[S^{1}\times\{0\}]\in\pi_{1}(M)$ and $\theta_{2}=\phi_{*}[\{0\}\times S^{1}]\in\pi_{1}(M)$. Define a map $\psi:T^{2}\to M$ by
\[\psi(s, t)=\left\{\begin{array}{ll}
\phi(2s, t) & \left(0\leq s\leq \dfrac{1}{2}\right) \\
(*,P_{2}\circ\phi(0, t)-P_{2}\circ\phi(2s-1, 0)) & \left(\dfrac{1}{2}\leq s\leq 1\right)
\end{array}\right..\]
Then we have $\psi_{*}[S^{1}\times\{0\}]=\theta_{1}-P_{2*}\theta_{1}=P_{1*}\theta_{1}\in\pi_{1}(M)$ and $\psi_{*}[T^{2}]=\sigma-n(-F)$ for some $n\in\mathbb{Z}$.

Let $\gamma_{1}\subset\Sigma_{g}$ be a closed curve with $[\gamma_{1}]=P_{1*}\theta_{1}\in\pi_{1}(\Sigma_{g})$ and let $\gamma_{2}\subset T^{2}$ be a closed curve with $[\gamma_{2}]=P_{2*}\theta_{2}\in\pi_{1}(T^{2})$. Define a map $\bar{\psi}:T^{2}\to M$ by \[\bar{\psi}(s, t)=(\gamma_{1}(s), \gamma_{2}(t)).\] Since $\pi_{2}(M)=\{0\}$, $\psi$ and $\bar{\psi}$ are homotopic. Now, we have
\[\sigma=[P_{1*}\theta_{1}]\otimes[P_{2*}\theta_{2}]+n(-F).\]
\end{proof}

\begin{remark}
For the case $g=1$, we also have $\phi_{*}[T^{2}]=u\otimes v+n(-F)$ under the additional condition $F\cdot\sigma=0$.
\end{remark}


\section{the proof of the main theorem}
\subsection{Proof for the case $F\cdot\sigma\neq0$}

By Lemma \ref{lemma:2}, we may suppose that $\sigma$ satisfies $b_{i}(\sigma)=d_{i}(\sigma)=0$ for all $i\geq 2$, $c_{1}(\sigma)=d_{1}(\sigma)=0$, $a_{1}(\sigma)\mid b_{1}(\sigma)$ and $a_{1}(\sigma)\mid e(\sigma)$. By the assumption $F\cdot\sigma\neq0$, we may assume that $e(\sigma)\neq 0$ and $a_{1}(\sigma)\neq 0$.

Now we construct an embedded surface in $M$ as follows. Let $b'=-\dfrac{e(\sigma)f(\sigma)}{a_{1}(\sigma)}$ and $n=\gcd(a_{1}(\sigma), f(\sigma))$. Take an embedded torus $\Tilde{\Sigma}\subset T^{4}=\Sigma_{1}\times T^{2}$ given by
\[T^{2}\to T^{4}:(u, v)\mapsto\left(\dfrac{a_{1}(\sigma)}{n}u, \dfrac{e(\sigma)}{a_{1}(\sigma)}v, v, \dfrac{f(\sigma)}{n}u\right).\]
Then, we have $[\Tilde{\Sigma}]=\dfrac{1}{n}(a_{1}(\sigma), b', 0, 0, e(\sigma), f(\sigma))\in H_{2}(\Sigma_{1}\times T^{2})$ and $[\Tilde{\Sigma}]^{2}=0$. Take $n$ parallel copies of $\Tilde{\Sigma}$ in $T^{4}$. We have an embedded torus $\Tilde{\Sigma}'\subset T^{4}$ by taking the circle sum around $S^{1}\hookrightarrow\Tilde{\Sigma}:\theta\mapsto\left(\dfrac{a_{1}(\sigma)}{n}\theta, 0, 0, \dfrac{f(\sigma)}{n}\theta\right)$. $($See Example \ref{ex:3}. Note that a tubular neighborhood of $\Tilde{\Sigma}$ is diffeomorphic to $T^{2}\times I\times I$.$)$ We may assume that $\Tilde{\Sigma}'$ intersects with $\{p=(\tfrac{1}{2}, \tfrac{1}{2})\}\times T^{2}$ transversely in $\bar{e}=|e(\sigma)|$ points $\{p\}\times \{q_{1}\}$, $\dots$, $\{p\}\times \{q_{\bar{e}}\}$, where $q_{1}$, $\dots$, $q_{\bar{e}}$ are the second components of intersection points in $\Sigma_{1}\times T^{2}$.

Now, we glue $(\Sigma_{g-1,1}\times T^{2}, \Sigma_{g-1,1}\times\{q_{1}, \dots, q_{\bar{e}}\})$ and $(\Sigma_{1}\times T^{2}, \Tilde{\Sigma}')\setminus(U_{p}\times T^{2})$ trivially, where $\Sigma_{g-1,1}$ is a compact oriented surface of genus $g-1$ with one boundary and $U_{p}\subset\Sigma_{1}$ is a small open disk around $p$. We have an embedded surface $\bar{\Sigma}\subset M$ with $g(\bar{\Sigma})=1+(g-1)\bar{e}$ and $[\bar{\Sigma}]=(a_{1}(\sigma), b', 0, 0, \dots, e(\sigma), f(\sigma))\in H_{2}(M)$.

We identify $T_{z_{1}t}\subset\Sigma_{1,1}\times T^{2}\subset M$ with $T^{2}\hookrightarrow\Sigma_{1,1}\times T^{2}:(u, v)\mapsto(0, u, \tfrac{1}{2}, v)$. Then, by the construction of $\bar{\Sigma}$, $T_{z_{1}t}$ intersects with $\bar{\Sigma}$ transversely in $\bar{a}=|a_{1}(\sigma)|$ points. By taking $(b_{1}(\sigma)-b')$ parallel copies of $T_{z_{1}t}$ and smoothing all intersection points with $\bar{\Sigma}$, we have a connected embedded surface $\bar{\Sigma}'$. Note that we have 
\begin{eqnarray*}
g(\bar{\Sigma}') & = & 1+|b_{1}(\sigma)-b'|\bar{a}+(g-1)\bar{e} \\
& = & 1+|a_{1}(\sigma)b_{1}(\sigma)+e(\sigma)f(\sigma)|+(g-1)|F\cdot\sigma| \\
& = & 1+\dfrac{1}{2}|\sigma\cdot\sigma|+(g-1)|F\cdot\sigma|
\end{eqnarray*}
and $[\bar{\Sigma}']=(a_{1}(\sigma), b_{1}(\sigma), 0, 0, \dots, e(\sigma), f(\sigma))\in H_{2}(M)$.

For each integer $2\leq i\leq g$, let $T_{i}$ be an embedded torus in $M$ defined by $T^{2}\to M:(u, v)\mapsto \left(x_{i}(u), \dfrac{a_{i}(\sigma)}{m_{i}}v, \dfrac{c_{i}(\sigma)}{m_{i}}v\right)$, where $m_{i}=\gcd(a_{i}(\sigma), c_{i}(\sigma))$. Note that we may assume that these tori and $\bar{\Sigma}'$ are disjoint. By taking $m_{i}$ parallel copies of $T_{i}$ for each $i$ and taking the circle sum operation between these tori and $\Sigma_{g-1,1}\times\{q_{1}\}\subset\bar{\Sigma}'$ as in Example \ref{ex:2}, we have a connected embedded surface $\Sigma$. The surface $\Sigma$ satisfies $[\Sigma]=\sigma$ and $g(\Sigma)=1+\dfrac{1}{2}|\sigma\cdot\sigma|+(g-1)|F\cdot\sigma|$.


\subsection{Proof for the case $F\cdot\sigma=0$}

If $\sigma^{2}\neq0$, by Lemma \ref{lemma:2}, we may suppose that $b_{i}(\sigma)=d_{i}(\sigma)=0$ for all $i\geq 2$, $c_{1}(\sigma)=d_{1}(\sigma)=0$, $a_{1}(\sigma)\neq 0$ and $b_{1}(\sigma)\neq 0$. Then we represent $\sigma$ by immersed tori as in Figure \ref{figure:3a}.

\begin{figure}[h]
\centering
\includegraphics[width=12cm]{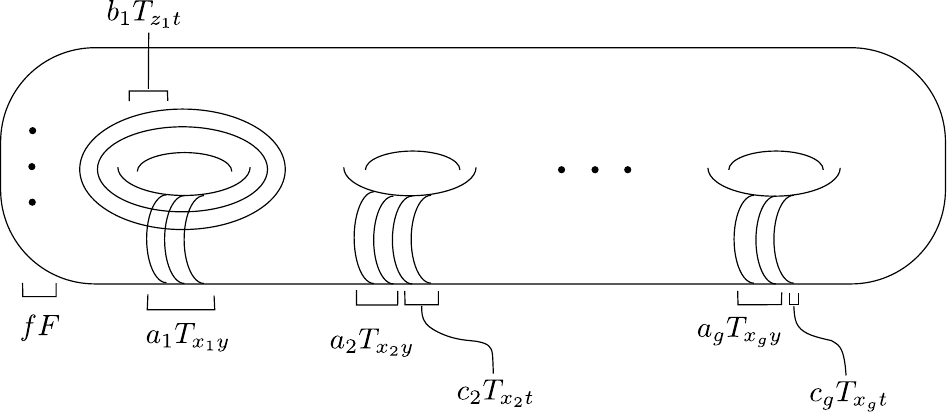}
\caption{\label{figure:3a}}
\end{figure}

Take circle sum operations in accordance with the diagram in Figure \ref{figure:3b}. The circle sum operation between a fiber $F$ and a torus $T_{z_{1}t}$ is performed as in Example \ref{ex:2}. $($Note that exchanging coordinate systems for fibers and sections gives a similar situation explained in Example \ref{ex:2}.$)$ And the others are performed as in Example \ref{ex:1} or Example \ref{ex:3}.

\begin{figure}[h]
\centering
\includegraphics[width=12cm]{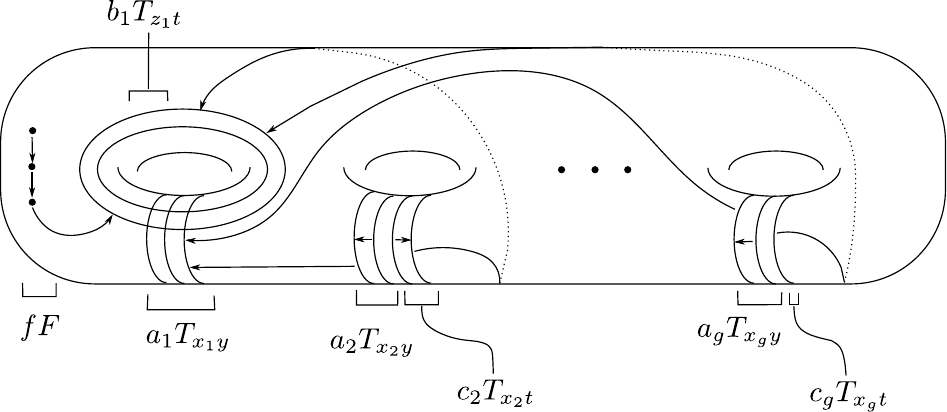}
\caption{\label{figure:3b}}
\end{figure}

After smoothing all intersections between $a_{1}(\sigma)T_{x_{1}y}$ and $b_{1}(\sigma)T_{z_{1}t}$, we have an embedded connected surface $\Sigma$ with $g(\Sigma)=1+\dfrac{1}{2}\mid\sigma\cdot\sigma\mid$.

Suppose that $\sigma\neq 0$ and $\sigma=u\otimes v+n(-F)$ for some $u\in H_{1}(\Sigma_{g})$, $v\in H_{1}(T^{2})$ and $n\in\mathbb{Z}$. If $u\otimes v=0$, clearly we can represent $\sigma$ by an embedded torus, and hence we may suppose that $u\neq 0$ and $v\neq 0$.

Let $k=\operatorname{div}(u)$, $l=\operatorname{div}(v)$, $u'=\dfrac{u}{k}$ and $v'=\dfrac{v}{l}$, where $\operatorname{div}(\cdot)$ is the divisibility. Take simple closed curves $\gamma_{1}\subset\Sigma_{g}$ with $[\gamma_{1}]=u'$ and $\gamma_{2}\subset T^{2}$ with $[\gamma_{2}]=v'$ and define an embedding $\phi:T^{2}\to M$ by $(s,t)\mapsto(\gamma_{1}(s), \gamma_{2}(t))$. Then we have $\phi_{*}[T^{2}]=k^{-1}l^{-1}u\otimes v$. Take $kl$ parallel copies of this embedded torus and $n$ parallel copies of a fiber. Then we have an embedded torus representing $\sigma$ by connecting all these tori using the circle sum.

Finally, suppose that $\sigma\cdot\sigma=0$ and $\sigma$ is not of the form $u\otimes v+n(-F)$ for all $u\in H_{1}(\Sigma_{g})$, $v\in H_{1}(T^{2})$ and $n\in\mathbb{Z}$. It suffices to show that $\sigma$ is represented by an embedded genus $2$ surface $\Sigma_{2}\subset M$. By the proof of Lemma \ref{lemma:1} and $\sigma^{2}=0$, we may suppose that $b_{i}(\sigma)=d_{i}(\sigma)=0$ for all $i\geq 1$. Now we represent $\sigma$ by embedded tori as in Figure \ref{figure:4} and we get (at most) two tori by taking the circle sum in accordance with the figure. Note that each torus represents $\sum_{i=1}^{g}a_{i}(\sigma)T_{x_{i}y}+f(\sigma)F$ and $\sum_{i=1}^{g}c_{i}(\sigma)T_{x_{i}t}$. 

\begin{figure}[h]
\centering
\includegraphics[width=12cm]{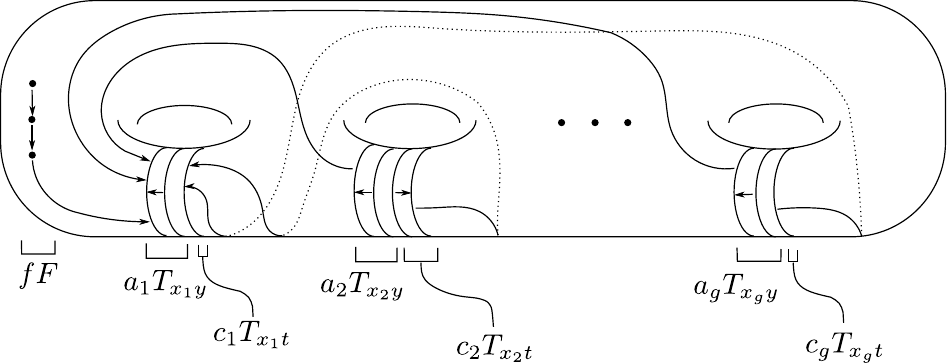}
\caption{\label{figure:4}}
\end{figure}

Then we have an embedded $\Sigma_{2}$ representing $\sigma$ by taking the connected sum of these tori.


\section{corollaries}\label{section:2}
\subsection{Minimal complexity functions}

\begin{definition}
Let $M$ be a closed oriented $4$-manifold. Define a map $x:H_{2}(M)\to\mathbb{Z}$ called the minimal complexity function by
\begin{eqnarray*}
\begin{array}{ll}
x(\sigma)=\min\{\chi_{-}(\Sigma)\mid\Sigma\subset M: & \mbox{a smooth embedded surface} \\
 & \mbox{$($not necessarily connected$)$ representing } \sigma\},
\end{array}
\end{eqnarray*}
where $\chi_{-}(\Sigma)$ is the complexity of a surface $\Sigma$. We also define a map $x_{c}:H_{2}(M)\to\mathbb{Z}$ by \[x_{c}(\sigma)=\min\{\chi_{-}(\Sigma)\mid\Sigma\subset M:\mbox{a smooth embedded connected surface representing } \sigma\}.\]

\end{definition}

\begin{remark}
Note that $\chi_{-}(S^{2})=0$ and $\chi_{-}(\Sigma_{h})=2h-2$ for all $h\geq 1$.
The minimal genus function $G$ distinguishes a sphere and a torus but $x_{c}$ does not. This is the only essential difference between $G$ and $x_{c}$.
\end{remark}

For minimal complexity functions, we use the following lower bound of M. Nagel~\cite{m} which generalizes results of P. B. Kronheimer~\cite{k} and S. Friedl and S. Vidussi~\cite{fv}.

\begin{theorem}[Nagel]\label{theorem:2}
Let $N$ be a graph manifold of composite type and let $p:M\to N$ be an $S^{1}$-bundle over $N$. Then we have \[x(\sigma)\geq\mid\sigma\cdot\sigma\mid+\|p_{*}\sigma\|_{T}\] for all homology classes $\sigma\in H_{2}(M)$, where $\|\cdot\|_{T}$ is the Thurston norm.
\end{theorem}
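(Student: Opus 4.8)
The plan is to deduce the bound from the generalized adjunction inequality applied to the total space $M$ of the circle bundle, after identifying its Seiberg-Witten basic classes with the cohomology classes of $N$ that detect the Thurston norm. First I would argue that the two terms $|\sigma\cdot\sigma|$ and $\|p_*\sigma\|_T$ arise from the two summands on the right-hand side of an adjunction inequality of the shape $\chi_-(\Sigma)\ge [\Sigma]^2+|\langle K,[\Sigma]\rangle|$, valid for each Seiberg-Witten basic class $K\in H^2(M)$: the self-intersection $[\Sigma]^2$ produces the first term and the pairing $\langle K,[\Sigma]\rangle$ against an optimally chosen basic class produces the second. Since both $K$ and $-K$ are basic by charge conjugation, one averages the two resulting inequalities to isolate $[\Sigma]^2$ and subtracts them to isolate $|\langle K,[\Sigma]\rangle|$; this is exactly what makes the final bound \emph{additive} rather than a maximum of the two contributions.

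The heart of the argument is the computation of the basic classes of $p:M\to N$, and I expect this to be the main obstacle. I would show that every basic class is a pullback $K=p^{*}\kappa$ and that, as $\kappa$ ranges over the detecting classes, $\max_{\kappa}|\langle p^{*}\kappa,[\Sigma]\rangle|=\max_{\kappa}|\langle\kappa,p_*[\Sigma]\rangle|$ computes precisely $\|p_*\sigma\|_T$, using that the dual Thurston norm is the support function of the polytope spanned by these classes. For a graph manifold $N$ this rests on two inputs: the Seiberg-Witten invariants of circle bundles over the Seifert pieces (Mrowka, Ozsv\'ath and Yu), assembled across the JSJ tori by a gluing and excision argument; and Gabai's theorem that norm-minimizing surfaces are carried by taut foliations, so that the classes produced by the bundle structure detect the \emph{full} Thurston norm rather than a proper lower bound for it. This is where the hypothesis that $N$ is of composite type does its real work: it guarantees that the relevant invariants are non-zero and that the dual norm is genuinely detected, excluding the small Seifert-fibered cases where the theory degenerates.

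Two technical points must be dispatched along the way. The form of the adjunction inequality recorded in Theorem \ref{thm:km} assumes $b_{2}^{+}(M)\ge 2$ and $[\Sigma]^2\ge 0$, whereas circle bundles force one to invoke the refinements valid at $b_{2}^{+}=1$ together with the Ozsv\'ath and Szab\'o extension to surfaces of negative self-intersection, so that it is $|\sigma\cdot\sigma|$ and not $\sigma\cdot\sigma$ that appears. Moreover, because $x(\sigma)$ is defined using possibly disconnected surfaces, I would apply the inequality to each component $\Sigma_i$ with $[\Sigma_i]^2\ge 0$, sum over components via $\chi_-(\Sigma)=\sum_i\max(0,-\chi(\Sigma_i))$, and then control the cross terms $\sum_{i\ne j}[\Sigma_i]\cdot[\Sigma_j]$; here the piecewise-linear structure of the Thurston norm on graph manifolds, combined with the fact that all detecting classes are pullbacks through $p$, is what keeps the summation additive and lets the component-wise bounds assemble into the stated inequality for $\sigma$ itself.
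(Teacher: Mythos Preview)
The paper does not prove this theorem. It is quoted as a result of Nagel, and immediately after the statement the paper writes: ``For the definition of graph manifolds of composite type and the proof of this theorem, see Nagel~\cite{m}.'' There is therefore no proof in the paper to compare your proposal against; the theorem functions here purely as an imported black box used to derive the corollary about $x(\sigma)$ and $x_c(\sigma)$ for $\Sigma_g\times T^2$.

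As for the content of your sketch: it is a plausible caricature of an adjunction-based approach, but several of the steps you flag as routine are in fact the substance of Nagel's paper and do not follow from the tools available in the present paper. In particular, the identification of the Seiberg-Witten basic classes of a circle bundle over a graph manifold with the classes detecting the Thurston norm, the handling of the $b_2^+=1$ wall-crossing, and the passage from connected to disconnected representatives are each nontrivial and are not consequences of Theorem~\ref{thm:km} as stated here. Your last paragraph about ``controlling the cross terms $\sum_{i\ne j}[\Sigma_i]\cdot[\Sigma_j]$'' is vague in a way that hides a genuine difficulty: for disconnected surfaces the componentwise adjunction inequalities do not sum to the claimed bound without an additional argument, and this is one of the points that requires real work. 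So while your outline names the right ingredients, it does not constitute a proof, and in any case the present paper makes no attempt to supply one.
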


For the definition of graph manifolds of composite type and the proof of this theorem, see M. Nagel~\cite{m}. Note that $\Sigma_{g}\times S^{1}$ ($g\geq 2$) is a graph manifold of composite type.

\begin{corollary}
Let $M=\Sigma_{g}\times T^{2}=(\Sigma_{g}\times S^{1})\times S^{1}$ with $g\geq 2$ and let $p:M\to \Sigma_{g}\times S^{1}$ be the projection to the first component. Then we have
\begin{eqnarray*}
x(\sigma) & = & |\sigma\cdot\sigma|+\|p_{*}\sigma\|_{T} \ \ (\mbox{for all } \sigma\in H_{2}(M)), \\
x_{c}(\sigma) & = & \left\{\begin{array}{ll}
|\sigma\cdot\sigma|+\|p_{*}\sigma\|_{T} & (\sigma^{2}\neq 0 \mbox{ or } F\cdot\sigma\neq0 \mbox{ or } \sigma=u\otimes v+n(-F)\mbox{ for } \\ & \mbox{ some } u\in H_{1}(\Sigma_{g}), v\in H_{1}(T^{2}) \mbox{ and } n\in\mathbb{Z}) \\
2 & (\mbox{otherwise})
\end{array}\right..
\end{eqnarray*}
\end{corollary}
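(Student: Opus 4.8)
The plan is to deduce both formulas from Theorem~\ref{theorem:1}, Theorem~\ref{theorem:2} (Nagel), and the adjunction-type lower bound already established, together with an explicit computation of the relevant Thurston norm. First I would record the translation dictionary between genus and complexity for \emph{connected} surfaces: if $\Sigma$ is connected and represents $\sigma\neq 0$ then $\chi_{-}(\Sigma)=\max\{0,2g(\Sigma)-2\}$, so $x_{c}(\sigma)=\max\{0,2G(\sigma)-2\}$ except that a class represented by a connected surface of genus $0$ or $1$ has $x_{c}(\sigma)=0$ while $G$ separates these. Plugging in the four cases of Theorem~\ref{theorem:1}: when $\sigma\cdot\sigma\neq 0$ or $F\cdot\sigma\neq 0$ we get $x_{c}(\sigma)=2G(\sigma)-2=|\sigma\cdot\sigma|+2(g-1)|\sigma\cdot F|$; when $\sigma$ is of the special form $u\otimes v+n(-F)$ (and $\sigma\neq 0$) we get $x_{c}(\sigma)=0$, which is consistent with $|\sigma\cdot\sigma|=0$, $\sigma\cdot F=0$; and in the remaining case $G(\sigma)=2$, so $x_{c}(\sigma)=2$. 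Thus the whole content of the $x_{c}$ statement is the identity $|\sigma\cdot\sigma|+2(g-1)|\sigma\cdot F|=|\sigma\cdot\sigma|+\|p_{*}\sigma\|_{T}$ in the first regime and $0=\|p_{*}\sigma\|_{T}$ in the second.

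So the key computation is the Thurston norm of $p_{*}\sigma$ in $N:=\Sigma_{g}\times S^{1}$, where $p:M\to N$ is projection. Writing $\sigma$ in the basis of Section~\ref{section:0}, $p_{*}$ kills the classes $T_{x_{i}t}$, $T_{z_{i}t}$ (those with a $t$-factor project to a class in $H_2$ carried by the circle direction $\times$ a point, i.e.\ to $0$ in... actually to $[\mathrm{pt}\times S^1]$-type classes) — I would instead argue directly: $H_{2}(\Sigma_g\times S^1)\cong (H_1(\Sigma_g)\otimes H_1(S^1))\oplus H_2(\Sigma_g)$, a class decomposes as $\alpha\otimes[S^1] + m[\Sigma_g\times\mathrm{pt}]$ with $\alpha\in H_1(\Sigma_g)$, and by Thurston's computation for surface-times-circle (Thurston's original examples) one has $\|\alpha\otimes[S^1]+m[\Sigma_g]\|_{T}=|m|(2g-2)+?$ — more precisely the Thurston norm of such a class equals $\max\{0,2|m|(g-1)\}$ plus a contribution measuring $\alpha$ only when $m=0$, in which case $\alpha\otimes[S^1]$ is represented by a torus (or annulus/disk if $\alpha$ not primitive times multiplicity) and has Thurston norm $0$. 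The upshot I want is: $\|p_{*}\sigma\|_{T}=2(g-1)|e(\sigma)|$ with $e(\sigma)$ the coefficient of $[\Sigma_g\times\mathrm{pt}]$, i.e.\ $e(\sigma)=S\cdot\sigma$-type quantity $=|\sigma\cdot F|$ after identifying the fiber class $F$ of $p$. I would verify $p_{*}\sigma$ has "$[\Sigma_g]$-coefficient" equal to $\pm(\sigma\cdot F)$ via the intersection pairing, and then cite Thurston's norm computation on $\Sigma_g\times S^1$ to conclude $\|p_{*}\sigma\|_{T}=2(g-1)|\sigma\cdot F|$; this matches $2(g-1)|\sigma\cdot F|$ in both regimes (and is $0$ exactly when $\sigma\cdot F=0$).

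For the unconstrained function $x$, the plan is: the lower bound $x(\sigma)\geq|\sigma\cdot\sigma|+\|p_{*}\sigma\|_{T}$ is Theorem~\ref{theorem:2} applied to the composite-type graph manifold $\Sigma_g\times S^1$ and the $S^1$-bundle $M\to\Sigma_g\times S^1$. For the matching upper bound I would produce, for every $\sigma$, a possibly disconnected embedded surface of complexity exactly $|\sigma\cdot\sigma|+2(g-1)|\sigma\cdot F|$. In every case except the "otherwise" case this surface is already given by (the connected representative built in) the proof of Theorem~\ref{theorem:1}, since there $x_{c}=|\sigma\cdot\sigma|+\|p_{*}\sigma\|_{T}$; in the "otherwise" case, where $x_{c}=2$ but $|\sigma\cdot\sigma|+\|p_{*}\sigma\|_{T}=0$, I must exhibit a \emph{disconnected} surface of complexity $0$, i.e.\ a disjoint union of tori and spheres representing $\sigma$. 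This is exactly the content of the last paragraph of the proof of the main theorem: there $\sigma$ (with $\sigma^2=0$, $\sigma\cdot F=0$) is shown to be represented by, at worst, two embedded tori before one takes their connected sum — keeping them disjoint gives $\chi_{-}=0$. So I would simply point back to that construction and drop the final connected-sum step.

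The main obstacle is the Thurston-norm computation on $\Sigma_g\times S^1$: I need the precise value $\|\alpha\otimes[S^1]+m[\Sigma_g\times\mathrm{pt}]\|_{T}=2|m|(g-1)$ for all $\alpha$ (in particular that the $\alpha$-part contributes nothing, because $\alpha\otimes[S^1]$ is always represented by an embedded union of tori/annuli), and I need to match the integer $m$ with $\pm(\sigma\cdot F)$ under $p_*$. The first is classical (it is one of Thurston's motivating examples, and also follows from the fact that a horizontal surface in a surface bundle over $S^1$ minimizes the norm) but should be stated carefully; the second is a short intersection-form bookkeeping check using that $p$ is an $S^1$-bundle with fiber class $F$, so that $p_{*}$ is Poincaré-dual-adjoint to $p^{*}$ and $\langle \mathrm{PD}[\Sigma_g], p_*\sigma\rangle = \langle p^*\mathrm{PD}[\Sigma_g],\sigma\rangle = \pm\,\sigma\cdot F$. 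Everything else is assembling Theorem~\ref{theorem:1}, the genus/complexity dictionary, and the two cited theorems.
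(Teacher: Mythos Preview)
Your proposal is correct and follows essentially the same route as the paper's (two-sentence) proof: identify $\|p_{*}\sigma\|_{T}=2(g-1)|\sigma\cdot F|$, read off $x_{c}$ from Theorem~\ref{theorem:1}, and for $x$ in the ``otherwise'' case use the disjoint union of embedded tori produced in the proof of the main theorem before the final connected sum. The only difference is that you take more care to justify the Thurston-norm identity and to invoke Nagel's inequality explicitly for the lower bound on $x$, whereas the paper simply asserts both.
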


\begin{proof}
Since $\|p_{*}\sigma\|_{T}=2(g-1)|\sigma\cdot F|$, the equality for $x_{c}$ follows from Theorem \ref{theorem:1}. Since a homology class $\sigma\in H_{2}(M)$ with $\sigma^{2}=0$ and $F\cdot\sigma=0$ is represented by embedded tori, the equality for $x$ follows.
\end{proof}

\begin{remark} \label{rmk:1}
For disconnected case, we may also have complexity minimizing surfaces by the construction explained in S. Friedl and S. Vidussi~\cite{fv} Lemma 4.1 and proof of Lemma 4.2. $($See also M. Nagel~\cite{mn} Section 5.6.$)$ This derives that the existence of a connected Thurston norm minimizing surface for every primitive class in $H_{2}(\Sigma_{g}\times S^{1})$ is sufficient for the sharpness of the inequality in Theorem \ref{theorem:2}.

Indeed, for any primitive class $\sigma = n[\Sigma_{g}] + m(\gamma\otimes [S^{1}]) \in H_{2}(\Sigma_{g}\times S^{1})$, where $\gamma\in H_{1}(\Sigma_{g})$ is primitive, we may take a connected Thurston norm minimizing surface representing $\sigma$. Take $n$-parallel copies of $\Sigma_{g}\times \{*\}$ and $m$-parallel copies of $\gamma\times S^{1}$. Then smoothing intersection set gives the desired surface.
\end{remark}


\subsection{Minimal genus function for other $T^{2}$-bundles}

\begin{theorem}
Let $p:N\to \Sigma_{g}$ be a nontrivial $S^{1}$-bundle over $\Sigma_{g}$ and let $M=N\times S^{1}$. Then we have
\begin{eqnarray*}
G(\sigma)=\left\{\begin{array}{ll}
0 & (\sigma=0) \\
1+\dfrac{1}{2}|\sigma\cdot\sigma| & (\mbox{either } \sigma^{2}\neq 0, \mbox{ or } \sigma\neq 0 \mbox{ and } \sigma=u\otimes v+n(-F) \\ & \mbox{ for some } u\in H_{1}(\Sigma_{g}), v\in H_{1}(T^{2}) \mbox{ and } n\in\mathbb{Z}) \\
2 & (otherwise) \\ 
\end{array}\right..
\end{eqnarray*}
\end{theorem}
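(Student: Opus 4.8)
The plan is to follow the same two-part strategy used for the product case $\Sigma_g\times T^2$: first establish the lower bound via the adjunction inequality applied to a symplectic structure on $M=N\times S^1$, and then realize that bound (or the value $2$ in the exceptional case) by explicit surfaces built from tori via circle sums. For the lower bound, note that $N$ is an $S^1$-bundle over $\Sigma_g$ with nonzero Euler number, so $M=N\times S^1$ fibers over $T^2$; more to the point, $M$ carries a symplectic form. Indeed $N\times S^1$ is a principal $T^2$-bundle over $\Sigma_g$ (Euler class of the second factor zero), and such manifolds admit symplectic structures for which the fiber $F=[T^2]$ is symplectic and $c_1$ is a pullback from $\Sigma_g$; one computes $c_1(\omega)\cdot\sigma$ in terms of $F\cdot\sigma$. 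Here, however, the key homological difference from the product case is that $H_2(M)$ no longer contains a class $S=[\Sigma_g\times\{*\}]$: the nontriviality of the bundle forces $F\cdot\sigma$ to be determined (up to the Euler number $n_0\neq 0$) by other data, and in fact $F$ becomes a torsion-like obstruction making the $(g-1)|F\cdot\sigma|$ term disappear. This is why the stated formula has no $|\sigma\cdot F|$ term. So the first step is: identify $H_2(M)$ explicitly (Gysin sequence for $N\to\Sigma_g$, then Künneth with $S^1$), check that the surviving classes all satisfy $F\cdot\sigma=0$ in the relevant sense, and deduce from Theorem~\ref{thm:km} and Taubes's theorem the bound $G(\sigma)\geq 1+\frac12|\sigma\cdot\sigma|$.

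For the upper bound, the remark in the previous subsection that ``our constructions of surfaces embedded in $\Sigma_g\times T^2$ also work'' should be made precise: the surfaces constructed in Section~\ref{section:3}'s proof of Theorem~\ref{theorem:1} — immersed tori pushed off along fibers and circle-summed, as in Figures~\ref{figure:3a}, \ref{figure:3b}, \ref{figure:4} — are all supported in $(\text{annulus or punctured surface})\times T^2$ regions, or built from tori of the form $x_i\times(\text{slope in }T^2)$ and fiber tori, none of which sees the global monodromy of the bundle. Thus one can run the analogue of Lemma~\ref{lemma:1} and Lemma~\ref{lemma:2} for $M=N\times S^1$ — reducing $\sigma$ by the diffeomorphisms $R_\alpha$, $D_{x_iy}$, $f_y$, $f_t$, all of which still make sense on $N\times S^1$ since a nontrivial $S^1$-bundle over $\Sigma_g$ still admits the mapping-class-group action $R_\alpha$ (it permutes bundles but those over a fixed $\Sigma_g$ with fixed Euler number are isomorphic) — to a normal form, and then apply verbatim the circle-sum constructions to get an embedded surface of genus $1+\frac12|\sigma\cdot\sigma|$ when $\sigma^2\neq 0$, an embedded torus when $\sigma=u\otimes v+n(-F)$, and an embedded $\Sigma_2$ otherwise.

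For the ``otherwise'' case — $\sigma^2=0$ and $\sigma$ not of the form $u\otimes v+n(-F)$ — the genus-$2$ upper bound comes from the Figure~\ref{figure:4} construction (circle-summing the $x_i$-tori in two families and taking a connected sum), and the genus-$2$ lower bound (ruling out a torus) comes from the analogue of Lemma~\ref{lemma:3}: any map $T^2\to N\times S^1$ has image class of the form $u\otimes v+n(-F)$, because projecting to $\Sigma_g$ (via $M\to N\to\Sigma_g$) and using $g\geq 2$ forces the $\pi_1$-image to be cyclic, exactly as before, and $\pi_2(M)=0$ lets one homotope to a product map. I expect the main obstacle to be the bookkeeping in Step 1: correctly computing $H_2(N\times S^1)$ and the intersection form, and verifying that the Seiberg-Witten basic class coming from the symplectic form contributes no $F$-term — in other words, pinning down exactly which symplectic form on $N\times S^1$ to use and checking $c_1(\omega)$ vanishes on all of $H_2$ modulo the fiber. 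The circle-sum constructions and the $\pi_1$ argument are then routine transcriptions of the product case.
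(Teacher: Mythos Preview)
Your upper-bound constructions and the $\pi_1$ argument for the analogue of Lemma~\ref{lemma:3} are fine and match the paper's approach. The genuine problem is the lower bound for $g\geq 2$: your plan to use Taubes's theorem presupposes a symplectic structure on $M=N\times S^1$, but no such structure exists. The $T^2$-fiber class $F$ is torsion of order $m$ in $H_2(M)$ (as you will find from either Gysin or the paper's Mayer--Vietoris computation), so Thurston's fibration criterion fails. More decisively, the theorem of Friedl--Vidussi says $N\times S^1$ is symplectic only if $N$ fibers over $S^1$; but a nontrivial circle bundle $N\to\Sigma_g$ with $g\geq 2$ cannot fiber over $S^1$. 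Indeed, the central fiber class $h\in\pi_1(N)$ has infinite order yet is torsion in $H_1(N)$, so under any surjection $\pi_1(N)\to\mathbb{Z}$ it lies in the kernel $\pi_1(\mathrm{fiber})$ and is central there; this forces the fiber to be a torus, whence $\pi_1(N)$ is solvable, contradicting the surjection onto $\pi_1(\Sigma_g)$. So there is no symplectic form whose $c_1$ you could compute, and the adjunction inequality via Taubes has no input.

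The paper handles this by invoking a different lower bound: Nagel's complexity inequality (Theorem~\ref{theorem:2}) for $S^1$-bundles over graph manifolds of composite type, applied to the projection $\tilde p = p\times\mathrm{Id}_{S^1}:M\to\Sigma_g\times S^1$. For $g\geq 2$ the base $\Sigma_g\times S^1$ is such a graph manifold, and Nagel's inequality gives $x(\sigma)\geq|\sigma\cdot\sigma|+\|\tilde p_*\sigma\|_T$, hence $G(\sigma)\geq 1+\tfrac12|\sigma\cdot\sigma|$ directly. Only for $g=1$ does the paper use a symplectic structure, which in that case genuinely exists because the nilmanifold $N$ does fiber over $S^1$. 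Your ``bookkeeping'' worry in Step~1 is therefore not bookkeeping but a real obstruction; you need to replace the symplectic argument with Nagel's theorem (or some other source of Seiberg--Witten basic classes for non-symplectic $M$) when $g\geq 2$.
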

\begin{proof}
Since any circle bundles over surfaces with a boundary are trivial, a restriction of the $T^{2}$-bundle $M\to \Sigma_{g}$ on a compact oriented surface of genus $g$ with one boundary $\Sigma_{g, 1}$ is trivial. Hence we apply the argument for constructing surfaces in trivial $T^{2}$-bundles over closed oriented surfaces. Note that $M=\Sigma_{g,1}\times T^{2}\cup_{\phi}D^{2}\times T^{2}$, where $\phi:S^{1}\times T^{2}\to S^{1}\times T^{2}$ is a diffeomorphism defined by $(\theta, s, t)\mapsto (\theta, s+m\theta,t)$ for some $m\in \mathbb{Z}\setminus\{0\}$. Consider the Mayer-Vietoris exact sequence 
\begin{eqnarray*}
&&\cdots \to H_{2}(S^{1}\times T^{2}) \stackrel{\psi_{2}}{\to} H_{2}(\Sigma_{g,1}\times T^{2})\oplus H_{2}(D^{2}\times T^{2})\to H_{2}(M) \\
&&\phantom{\cdots}\to H_{1}(S^{1}\times T^{2}) \stackrel{\psi_{1}}{\to} H_{1}(\Sigma_{g,1}\times T^{2})\oplus H_{1}(D^{2}\times T^{2})\to\cdots.
\end{eqnarray*}
Since $\psi_{1}$ is injective, we have $H_{2}(M)\cong (H_{2}(\Sigma_{g,1}\times T^{2})\oplus H_{2}(D^{2}\times T^{2}))/\operatorname{Im}\psi_{2}\cong H_{2}(\Sigma_{g,1}\times T^{2})/\langle mF\rangle\cong\mathbb{Z}^{4g}\oplus\mathbb{Z}_{m}$.

By the map $\tilde{p}=p\times Id_{S^{1}}:M\to \Sigma_{g}\times S^{1}$ we see $M$ as an $S^{1}$-bundle over a graph manifold of composite type if $g\geq 2$. Then, by Theorem \ref{theorem:2}, we have the lower bound for the complexity \[x(\sigma)\geq|\sigma\cdot\sigma|+\|\tilde{p}_{*}\sigma\|_{T}\] and hence, we have the lower bound $1+\dfrac{1}{2}|\sigma\cdot\sigma|\leq\dfrac{1}{2}(x(\sigma)+2)\leq G(\sigma)$ for all $\sigma\in H_{2}(M)\setminus\{0\}$. Note that all spheres embedded in $M$ represent $0\in H_{2}(M)$. (If $g=1$, $M$ has a symplectic structure and we also have the lower bound $1+\dfrac{1}{2}|\sigma\cdot\sigma|\leq G(\sigma)$.) We can show the same statement as in Lemma \ref{lemma:3} for this $M$ and improve the lower bound. 

Note that we may take a genus minimizing surface for $\sigma\in H_{2}(\Sigma_{g}\times T^{2})$ with $\sigma\cdot F = 0$ in $\Sigma_{g, 1}\times T^{2}$ and we may regard this surface as the genus minimizing surface in $M=\Sigma_{g,1}\times T^{2}\cup_{\phi}D^{2}\times T^{2}$.
\end{proof}


\subsection{Automorphisms on $H_{2}(\Sigma_{g}\times T^{2})$}

In this subsection we denote $\Sigma_{g}\times T^{2}$ by $M$ and assume that $g\geq 2$. We observe automorphisms of $H_{2}(M)$ induced by orientation preserving self-diffeomorphisms. Let $\mathcal{H}$ be the subgroup of the automorphism group ${\rm Aut}(H_{2}(M))$ defined by \[\mathcal{H}=\{\phi\in{\rm Aut}(H_{2}(M))\mid\phi^{*}Q=Q \mbox{ and }\phi^{*}G=G\},\]where $Q$ is the intersection form of $M$. Let $\theta:{\rm Diff}^{+}(M)\to\mathcal{H}$ be the obvious homomorphism from the group of orientation preserving diffeomorphisms of $M$. Then we have following theorems.

\begin{theorem}\label{theorem:431}
$\mathcal{H}/{\rm Im}\theta\cong\mathbb{Z}/2\mathbb{Z}$.
\end{theorem}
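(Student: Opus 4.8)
\textbf{Proof proposal for Theorem \ref{theorem:431}.}

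The plan is to analyze $\mathcal{H}$ concretely using the explicit form of $G$ from Theorem \ref{theorem:1}, and then to exhibit enough diffeomorphisms to fill up an index-$2$ subgroup. First I would observe that any $\phi\in\mathcal{H}$ must preserve the set of classes that are "genus-efficient" in the strong sense, i.e. the locus where the adjunction bound $1+\tfrac12|\sigma\cdot\sigma|+(g-1)|\sigma\cdot F|$ is \emph{not} attained. Since $\phi^*Q=Q$, the quantity $\sigma\cdot\sigma$ is preserved, so the difference $G(\sigma)-\bigl(1+\tfrac12|\sigma\cdot\sigma|+(g-1)|\sigma\cdot F|\bigr)$ being preserved forces $|\sigma\cdot F|$ to be preserved on a Zariski-dense set and hence everywhere; thus $\phi$ preserves $\pm F$ (as $F$ is, up to sign, characterized as a primitive class $v$ with $v\cdot v=0$ and $v\cdot w$ divisible appropriately — more cleanly, $F^\perp$ and the radical-type conditions pin down $\mathbb{Z}F$). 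This already cuts $\mathcal{H}$ down to the stabilizer of the line $\mathbb{Z}F$ inside the orthogonal group $O(Q)=O(H^{\oplus 2g+1})$, together with the extra genus constraint coming from the second and third cases of Theorem \ref{theorem:1}.

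Next I would identify $\operatorname{Im}\theta$ explicitly. The diffeomorphisms $R_\alpha$, $D_{x_iy}$, $f_y$, $f_t$ constructed in Section \ref{section:0}, together with obvious extra symmetries — the coordinate swaps exchanging $\Sigma_g$-handles, the orientation-reversing-on-one-factor maps composed in pairs to stay in $\operatorname{Diff}^+$, and a diffeomorphism realizing an element of $\mathrm{Sp}(2g,\mathbb{Z})\times SL(2,\mathbb{Z})$ — should generate a subgroup $\Gamma\subseteq\operatorname{Im}\theta$. I would argue that $\Gamma$ already has index $2$ in $\mathcal{H}$: on the one hand $\Gamma\subseteq\operatorname{Im}\theta\subseteq\mathcal{H}$; on the other hand, the quotient $\mathcal{H}/\Gamma$ injects into a small finite group detected by how $\phi$ acts on the "discriminant-type" data of $F$ (roughly, $\sigma\mapsto\sigma$ versus the map interchanging the roles of $S$ and $F$, which scales $c_1(\omega)$ by $\pm$ and is \emph{not} realized by any diffeomorphism because it would have to send the class $S$, with $G(S)=g$, to a class like $F$ with $G(F)=1$). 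Concretely I expect the non-trivial coset of $\operatorname{Im}\theta$ to be represented by the involution that acts as $-\mathrm{Id}$ on $H_1(\Sigma_g)\otimes H_1(T^2)$ and swaps (or negates) $S$ and $-F$ in a way compatible with $Q$ but incompatible with $G$-on-the-nose... wait, it must be compatible with $G$ since it lies in $\mathcal{H}$; so the right description is an orthogonal involution preserving $G$ but not induced by a diffeomorphism, and the obstruction is exactly a $\mathbb{Z}/2$.

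The heart of the argument, and the main obstacle, is the second inclusion: showing $\mathcal{H}\subseteq\Gamma\cup(\text{one coset})$, i.e. that the list of constructed diffeomorphisms is essentially complete. This is a generation statement for an arithmetic-type group — the stabilizer of a primitive isotropic line in $O(H^{\oplus 2g+1})$ subject to the genus constraints — and the clean way to handle it is a normal-form/reduction argument mirroring Lemmas \ref{lemma:1} and \ref{lemma:2}: given $\phi\in\mathcal{H}$, use $\Gamma$ to move $\phi(T_{x_1y})$, $\phi(S)$, $\phi(-F)$ into standard position, then use the preservation of $Q$ and of the three-case structure of $G$ (especially the middle "$u\otimes v+n(-F)$" case, which forces $\phi$ to respect the "split" rank-one classes) to pin down $\phi$ up to the residual $\mathbb{Z}/2$. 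I would present the $g=2$ case in detail and indicate the inductive step in $g$. The payoff is that the explicit generating set for $\operatorname{Im}\theta$ promised in the introduction falls out of this reduction, and the leftover $\mathbb{Z}/2$ is precisely the class of the orthogonal involution that preserves $G$ numerically but cannot be realized smoothly.
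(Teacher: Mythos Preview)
Your overall plan—normalize $\phi\in\mathcal{H}$ via explicit diffeomorphisms and then classify the residue—is exactly the paper's strategy (carried by Lemmas~\ref{lemma:4}, \ref{lemma:5}, \ref{lemma:7}), and your remark that the ``$u\otimes v+n(-F)$'' case forces $\phi$ to respect rank-one tensors is on target. But you never correctly identify the nontrivial coset. Your guesses (the involution $-\mathrm{Id}$ on $H_1(\Sigma_g)\otimes H_1(T^2)$, or something swapping $S$ and $F$) are not it; the second does not even preserve $G$, as you yourself notice, so it is not in $\mathcal{H}$ at all and cannot represent a coset there. The paper's representative is the map
\[
(a_i,b_i,c_i,d_i,e,f)\ \longmapsto\ (-a_i,-b_i,\,c_i,\,d_i,\,e,\,f),
\]
negating only the $T_{x_iy}$- and $T_{z_it}$-coordinates; one then checks it lies in $\mathcal{H}$ by verifying directly that it preserves the set $\{u\otimes v+n(-F)\}$.

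The genuine gap is the obstruction showing this map is \emph{not} in $\operatorname{Im}\theta$. Your attempt via $G(S)\neq G(F)$ could only show a map is not in $\mathcal{H}$; it can never witness that an element \emph{of} $\mathcal{H}$ fails to lie in $\operatorname{Im}\theta$. Indeed, since both $Q$ and $G$ are by definition preserved by everything in $\mathcal{H}$, no invariant built from $(H_2,Q,G)$ alone can separate $\operatorname{Im}\theta$ from $\mathcal{H}$. The paper's obstruction (Lemma~\ref{lemma:6}) lives in $H_1$, not $H_2$: a diffeomorphism $h$ also acts on $H_1(M)=H_1(\Sigma_g)\oplus H_1(T^2)$, and once $h_*$ is in diagonal form with $h_*(S)=S$ and $h_*(-F)=-F$, one deduces that $h_*$ acts by signs on the $H_1$-generators with $\varepsilon(x_i)=\varepsilon(z_i)$ and $\varepsilon(y)=\varepsilon(t)$, which forces $\varepsilon(T_{x_iy})=\varepsilon(T_{z_it})=\varepsilon(T_{x_it})=\varepsilon(-T_{z_iy})$ and rules out the map above. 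Your proposal stays entirely inside $(H_2,Q,G)$, where this obstruction is invisible.
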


\begin{theorem}\label{theorem:432}
${\rm Im}\theta$ is generated by $(R_{\alpha})_{*}$ $(\mbox{for }\alpha\in H_{1}(\Sigma_{g}) \mbox{ primitive})$, $(D_{x_{1}y})_{*}$, $(f_{y})_{*}$, $(f_{t})_{*}$ and an automorphism $h_{*}$ induced by a diffeomorphism $h$ which is the product of orientation reversing diffeomorphisms $\Sigma_{g}\to\Sigma_{g}$ and $T^{2}\to T^{2}$.
\end{theorem}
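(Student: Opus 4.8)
\textbf{Proposal for the proof of Theorem \ref{theorem:432}.}
The plan is to show that every $\phi\in{\rm Im}\theta$ can be written as a product of the listed generators by reducing an arbitrary $\phi\in{\rm Im}\theta$ to the identity through multiplication by these generators. The key point is that the normalization procedure already developed in Lemmas \ref{lemma:1} and \ref{lemma:2} (which uses exactly the diffeomorphisms $R_\alpha$, $D_{x_1y}$, $f_y$, $f_t$) is enough to put any \emph{homology class} into a standard form; I would upgrade this to a statement about \emph{automorphisms} by tracking what these moves do to a whole basis rather than to a single vector. Concretely, let $G'\subseteq{\rm Im}\theta$ be the subgroup generated by $R_{\alpha*}$, $D_{x_1y*}$, $f_{y*}$, $f_{t*}$, and $h_*$; I want $G'={\rm Im}\theta$. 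Given $\phi\in{\rm Im}\theta$, apply generators from $G'$ on the left to successively normalize the images $\phi(T_{x_1y}),\phi(T_{z_1t}),\dots$; since $\phi$ preserves $Q$ and $G$, the constraints coming from those invariants force the normalized images to coincide with the original basis vectors up to the action of $G'$, so $\phi\in G'$.

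The steps, in order, would be: (i) Record that $G'\subseteq{\rm Im}\theta$ — this is immediate since each listed generator was constructed in Section \ref{section:0} as a self-diffeomorphism of $M$, and $h$ visibly preserves orientation (product of two orientation-reversing factors). (ii) Reinterpret the computations following Lemma \ref{lemma:2}: the moves $R_\alpha, D_{x_1y}, f_y, f_t$ generate, on the ``geometric'' summand spanned by $T_{x_iy},T_{z_it},T_{x_it},-T_{z_iy},S,-F$, a large subgroup of the isometry group of $Q=H^{\oplus 2g+1}$; I would identify precisely which isometries arise, using the block structure ($g$ hyperbolic blocks for the $x_i,z_i$ data, one for $S,-F$). (iii) Use Theorem \ref{theorem:1} to pin down $\phi$ further: since $\phi^*G=G$ and $G$ distinguishes the orbit of $F$ (fibers have $F\cdot F=0$, $F\cdot S\ne0$, and $G(F)=1$ while generic isotropic classes have $G=2$), $\phi$ must send $\pm F$ to a class with the same genus-$1$, square-zero, ``product-type'' property; combined with $Q$-preservation this should force $\phi(-F)$ and $\phi(S)$ into a controlled set of classes reachable by $G'$. (iv) After arranging $\phi(-F)=-F$ and $\phi(S)=S$, the remaining action is on the orthogonal complement $\langle T_{x_iy},T_{z_it},T_{x_it},-T_{z_iy}\rangle^{\oplus g}\cong H^{\oplus 2g}$, and one checks that $\{R_{\alpha*}\}\cup\{f_{y*},f_{t*}\}$ together with $h_*$ generate the subgroup of ${\rm Aut}\cong O(H^{\oplus 2g})$ that is realized by diffeomorphisms — here one invokes the classical fact (e.g.\ via the mapping class group of $\Sigma_g$ surjecting onto $\mathrm{Sp}(2g,\mathbb{Z})$, crossed with $\mathrm{SL}(2,\mathbb{Z})$ acting on $T^2$) that these moves suffice. (v) Conclude $\phi\in G'$.

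I expect the main obstacle to be step (iii)–(iv): controlling where $\phi$ can send $S$ and $-F$ and then showing the residual symplectic/mapping-class part is \emph{exactly} generated by the listed moves, with no extra isometry of $Q$ sneaking in that happens to preserve $G$ but is not in $G'$. The delicate point is that $\mathcal{H}$ is strictly larger than ${\rm Im}\theta$ (by Theorem \ref{theorem:431} the quotient is $\mathbb{Z}/2$), so the argument must be careful not to over-claim: I need to separate the $G$-preserving isometry that is \emph{not} realized by a diffeomorphism from those that are, and verify that every $Q$- and $G$-preserving isometry \emph{that does lift} lies in $G'$. I would handle this by first proving Theorem \ref{theorem:431} — identifying the non-realized class explicitly (the natural candidate is the ``swap $S\leftrightarrow$ something'' or a reflection type map detected by the failure of the adjunction bound in Lemma \ref{lemma:3}) — and then Theorem \ref{theorem:432} becomes the statement that ${\rm Im}\theta$, an index-$2$ subgroup of $\mathcal{H}$, is generated by the enumerated lifts. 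The routine parts — the explicit matrix computations showing each generator preserves $Q$, and the Euclidean-algorithm-style reductions — I would relegate to the style of the proofs of Lemmas \ref{lemma:1} and \ref{lemma:2} rather than reproduce in full.
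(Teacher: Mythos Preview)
Your overall strategy is correct and is essentially the paper's approach: reduce an arbitrary $\phi$ to the identity (or to the single non-realized involution) by left-multiplying with the listed generators, using $Q$- and $G$-preservation along the way, and deduce Theorem \ref{theorem:432} from Theorem \ref{theorem:431}. Indeed the paper's proof of Theorem \ref{theorem:432} is literally one sentence: all diffeomorphisms used in Lemmas \ref{lemma:4}--\ref{lemma:7} and in the proof of Theorem \ref{theorem:431} are compositions of the listed maps, so once those lemmas establish $[\mathcal{H}:G']\le 2$ and Lemma \ref{lemma:6} rules out the nontrivial coset from ${\rm Im}\theta$, the equality ${\rm Im}\theta=G'$ follows.

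One point where your outline diverges from the paper and is worth sharpening: in step (iv) you propose to control the residual action on $H_1(\Sigma_g)\otimes H_1(T^2)$ by invoking the surjection ${\rm MCG}(\Sigma_g)\twoheadrightarrow{\rm Sp}(2g,\mathbb{Z})$ crossed with ${\rm SL}(2,\mathbb{Z})$. That tells you which \emph{tensor-product} automorphisms are realized, but it does not by itself explain why an arbitrary $\phi\in\mathcal{H}$ fixing $S$ and $-F$ must act as a tensor product on that summand --- and that is exactly where the index-$2$ obstruction lives. The paper handles this instead via Lemma \ref{lemma:7}: $G$-preservation (through Theorem \ref{theorem:1} and Lemma \ref{lemma:3}) forces each $\phi(T_{uv})$ to have the form $\tilde u\otimes\tilde v+n(-F)$, and a short combinatorial argument with the genus-$1$ condition pins down $\tilde u,\tilde v$ up to sign. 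The separation of the realized and non-realized cosets then comes from Lemma \ref{lemma:6}, whose proof genuinely uses that a diffeomorphism acts compatibly on $H_1(M)$ (so $h_*(y),h_*(t)\in H_1(T^2)$), not merely that it lies in $\mathcal{H}$. If you replace your ``classical fact'' appeal with these two ingredients, your plan matches the paper exactly.
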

 
To prove these theorems, we need some lemmas.

\begin{lemma}\label{lemma:4}
For any automorphism $\phi\in\mathcal{H}$, we have $\phi(F)=\pm F$.
\end{lemma}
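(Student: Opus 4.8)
The plan is to characterize the class $F$ (up to sign) purely in terms of the two invariants that $\phi$ preserves, namely the intersection form $Q$ and the minimal genus function $G$. The key observation is that $G$ detects a distinguished subset of the characteristic-zero isotropic part of $H_{2}(M)$: by Theorem \ref{theorem:1}, if $\sigma$ is a nonzero class with $\sigma\cdot\sigma=0$ then $G(\sigma)\in\{1,2\}$, and $G(\sigma)=1$ precisely when $\sigma=u\otimes v+n(-F)$ for some $u\in H_{1}(\Sigma_{g})$, $v\in H_{1}(T^{2})$, $n\in\mathbb{Z}$; moreover for such classes one has $F\cdot\sigma=0$ automatically (since $F\cdot F=0$, $F\cdot T_{uv}=0$). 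So the set $\mathcal{T}:=\{\sigma\neq 0\mid \sigma\cdot\sigma=0,\ G(\sigma)=1\}$ is $\phi$-invariant, and every class in $\mathcal{T}$ is orthogonal to every other class in $\mathcal{T}$ with respect to $Q$ — wait, that is not quite right, so instead I would isolate $F$ as follows. First, $\pm F$ lies in $\mathcal{T}$. Second, I claim $F$ is characterized inside $\mathcal{T}$ (up to sign) as a generator of $\bigcap_{\sigma\in\mathcal{T}}\sigma^{\perp}\cap\,\mathrm{span}_{\mathbb{Z}}(\mathcal{T})$, or more concretely by the property that $F$ is the unique primitive class (up to sign) lying in $\mathcal{T}$ such that $F+\tau\in\mathcal{T}\cup\{0\}$ for \emph{every} $\tau\in\mathcal{T}$ with $F\cdot\tau=0$. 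Since $\phi$ preserves $Q$ and $G$, it preserves $\mathcal{T}$ and hence preserves whichever of these characterizations is used, forcing $\phi(F)=\pm F$.

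The step I actually expect to work cleanly is the following reformulation. Decompose $H_{2}(M)=W\oplus\mathbb{Z}S\oplus\mathbb{Z}(-F)$ where $W=H_{1}(\Sigma_g)\otimes H_{1}(T^2)$; note $Q|_{W}$ is nondegenerate (it is $H^{\oplus 2g}$), $Q$ pairs $S$ with $(-F)$ by $S\cdot(-F)=1$, and both $S,(-F)$ are orthogonal to $W$. The class $F$ spans the kernel... no: $Q$ is nondegenerate on all of $H_2(M)$, so I cannot extract $F$ from $Q$ alone — that is exactly why $G$ is needed, and it is the whole point of the subsection. So the real content is: among all primitive isotropic classes, those with $G=1$ (the "simple" ones) span a specific subspace, and $F$ is distinguished in it. Concretely, $\sigma=u\otimes v+n(-F)$ has $\sigma\cdot S = u\otimes v\cdot S + n(-F)\cdot S$; using $T_{uv}\cdot S=0$ and $(-F)\cdot S=-1$ we get $\sigma\cdot S=-n$. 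Thus among the classes in $\mathcal{T}$, the multiples of $(-F)$ are exactly those $\sigma\in\mathcal{T}$ with the property that $\sigma\cdot\tau=0$ for all $\tau\in\mathcal{T}$ — because $F$ is orthogonal to all of $W$ and to itself, whereas any $\sigma$ with a nonzero $u\otimes v$ part pairs nontrivially with some $T_{u'v'}\in\mathcal{T}$ (as $Q|_W$ is nondegenerate and $T_{u'v'}\in\mathcal{T}$ whenever $u',v'$ are primitive). Hence $\mathbb{Z}F=\{\sigma\in\mathrm{span}_{\mathbb{Z}}\mathcal{T}\mid \sigma\cdot\tau=0\ \forall\tau\in\mathcal{T}\}$, a set manifestly preserved by $\phi$, and since $F$ is primitive we conclude $\phi(F)=\pm F$.

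The main obstacle is the verification that $\mathcal{T}$ spans enough of $H_2(M)$ and that no "exotic" element of $\mathcal{T}$ (one whose $u\otimes v$ summand is nonzero but, say, decomposable in an unexpected way) is $Q$-orthogonal to all of $\mathcal{T}$; this amounts to checking that the decomposable vectors $\{u\otimes v : u\in H_1(\Sigma_g), v\in H_1(T^2)\ \text{primitive}\}$ together with the fibers linearly span $W\oplus\mathbb{Z}(-F)$ and that their $Q$-orthogonal complement inside this span is exactly $\mathbb{Z}F$ — a concrete linear-algebra computation with the basis $T_{x_iy},T_{z_it},T_{x_it},-T_{z_iy}$ that I would carry out using nondegeneracy of $H^{\oplus 2g}$ and the fact that each basis vector $T_{x_iy}$ etc. is itself decomposable and isotropic, hence in $\mathcal{T}$. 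Once that is in place the lemma follows immediately.
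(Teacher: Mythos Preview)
Your argument is correct. The key set $\mathcal{T}=\{\sigma\neq 0:\sigma\cdot\sigma=0,\ G(\sigma)=1\}$ coincides (for $g\geq 2$) with $\{u\otimes v+n(-F)\neq 0\}$ by Theorem~\ref{theorem:1}, and since each basis element $T_{x_iy},T_{z_it},T_{x_it},-T_{z_iy}$ and $-F$ lies in $\mathcal{T}$, one has $\operatorname{span}_{\mathbb{Z}}\mathcal{T}=W\oplus\mathbb{Z}(-F)$. The restriction of $Q$ to this span has radical exactly $\mathbb{Z}F$ (since $Q|_{W}\cong H^{\oplus 2g}$ is nondegenerate and $F\perp W$, $F\perp F$), and this radical is manifestly $\phi$-invariant. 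Primitivity of $F$ finishes it. The exploratory detours in your write-up can be deleted; the final paragraph already contains the whole proof.

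This is a genuinely different route from the paper's. The paper also starts from the $\phi$-invariance of the set $K=\{\sigma:G(\sigma)\leq 1\}$ (your $\mathcal{T}\cup\{0\}$), but then isolates $F$ by a \emph{translation} property rather than an orthogonality one: it observes that $K$ is closed under $\sigma\mapsto\sigma+F$, whereas if $\phi(F)=u\otimes v+nF$ with $u\otimes v\neq 0$ then $K$ is \emph{not} closed under $\sigma\mapsto\sigma+\phi(F)$, contradicting $\phi(K)=K$. Your orthogonality characterization is more linear-algebraic and gives $\mathbb{Z}F$ directly without a contradiction argument; the paper's translation characterization avoids having to identify $\operatorname{span}_{\mathbb{Z}}\mathcal{T}$ and its radical but needs one to exhibit (implicitly) a class $\sigma\in K$ with $\sigma+u\otimes v\notin K$. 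Both extract the same content from Theorem~\ref{theorem:1}; yours is arguably the cleaner packaging.
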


\begin{proof}
By Lemma \ref{lemma:3}, $\phi(F)=u\otimes v+nF$ for some $u\in H_{1}(\Sigma_{g})$, $v\in H_{1}(T^{2})$ and $n\in\mathbb{Z}$. For the sake of contradiction, suppose that $u\otimes v\neq 0$. Define subsets $K$ and $K_{\phi}$ of $H_{2}(M)$ by
\begin{eqnarray*}
K&=&\{\sigma\in H_{2}(M)\mid G(\sigma)\leq 1 \mbox{ and } G(\sigma+F)\leq1\}, \\
K_{\phi}&=&\{\sigma\in H_{2}(M)\mid G(\sigma)\leq1 \mbox{ and } G(\sigma+\phi(F))\leq1\}.
\end{eqnarray*}
 
Clearly, we have $K_{\phi}=\phi(K)$. By Theorem \ref{theorem:1}, we have
\[G(\sigma)\leq 1 \Leftrightarrow G(\sigma + nF)\leq 1\] 
for any class $\sigma\in H_{2}(M)$ and integer $n\in\mathbb{Z}$. Hence we have
\begin{eqnarray*}
K&=&\{\sigma\in H_{2}(M)\mid G(\sigma)\leq1\}, \\
K_{\phi}&=&\{\sigma\in H_{2}(M)\mid G(\sigma)\leq1 \mbox{ and } G(\sigma+u\otimes v)\leq1\}.
\end{eqnarray*}

That is, $K_{\phi}\subsetneq K$ which contradicts $K_{\phi}=\phi(K)=K$.
\end{proof}

\begin{lemma}\label{lemma:5}
For any automorphism $\phi$ in $\mathcal{H}$, there exists an orientation preserving diffeomorphism $h:M\to M$ such that $h_{*}\circ\phi(S)=S$.
\end{lemma}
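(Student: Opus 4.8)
\textbf{Proof proposal for Lemma \ref{lemma:5}.}

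The plan is to reduce $S$ to a standard form using the diffeomorphisms constructed in Section \ref{section:0}, exploiting that $\phi$ preserves both the intersection form $Q$ and the minimal genus function $G$. First I would record the numerical constraints on $\sigma:=\phi(S)$. Writing $\sigma=(a_1,b_1,c_1,d_1,\dots,a_g,b_g,c_g,d_g,e,f)$, the condition $\phi^*Q=Q$ forces $\sigma\cdot\sigma=S\cdot S=0$ and, applied to the pair $(S,F)$ together with Lemma \ref{lemma:4} (so $\phi(F)=\pm F$), gives $\sigma\cdot F=\pm S\cdot F=0$; hence $e(\sigma)=0$ as well (since $S\cdot F = 1$, we get $|\sigma\cdot F|=1$ — wait, here I must be careful: $F\cdot S = 1$, so $\phi(S)\cdot\phi(F) = 1$, i.e. $\sigma\cdot(\pm F)=1$, giving $|\sigma\cdot F|=1$, not $0$). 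So in fact $e(\sigma)=\pm 1$ and $\sigma^2=0$. Composing with the orientation-reversing product diffeomorphism $h$ of Theorem \ref{theorem:432} if necessary, I may assume $e(\sigma)=1$.

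Next I would apply Lemma \ref{lemma:2} to $\sigma$: there is a diffeomorphism $h_0$ with $h_{0*}(\sigma)$ satisfying $b_i=d_i=0$ for $i\ge 2$, $c_1=d_1=0$, $a_1\mid b_1$, and $a_1\mid e$. Since $e(\sigma)=\pm 1$ is unchanged in absolute value under these diffeomorphisms up to the $\pm F$ bookkeeping (the formulas in Section \ref{section:0} show $e$ is fixed by $R_\alpha$, $D_{x_1y}$, $f_y$, $f_t$), we still have $e=\pm1$, so $a_1\mid e$ forces $a_1=\pm1$, whence $a_1\mid b_1$ is automatic and, after one more application of a Dehn twist $R_{x_1}^{\pm}$ or $R_{z_1}^{\pm}$, I can arrange $b_1=0$ too. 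At this point $h_{0*}h_{*}(\sigma)=(a_1,0,0,0,0,\dots,0,a_g,0,c_g,0,e,f)$ with $a_1,e\in\{\pm1\}$; using $\sigma^2 = 2\sum(a_ib_i+c_id_i) = 0$ is automatic now. Then I would kill the remaining coordinates $a_2,c_2,\dots,a_g,c_g$ and $f$: the curves $T_{x_iy}$, $T_{x_it}$ (for $i\ge 2$) and $(-F)$ can be traded off against $S$ using the maps $D_{x_iy}$ and appropriate Dehn twists — concretely, $D_{x_iy*}$ changes $d_i$ by $e$ and $f$ by $-c_i$, and analogous maps $D_{x_it}$, $D_{z_iy}$ etc. handle the rest; since $e=\pm1$ these subtractions are unobstructed. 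The upshot is a diffeomorphism $h_1$ with $h_{1*}(\sigma)=(\pm1,0,\dots,0,\pm1,0)=\pm S$ or $\pm S + (\text{stuff killed})$, landing exactly on $S$ after a final orientation adjustment.

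The main obstacle I anticipate is bookkeeping: ensuring that the sign normalizations ($e=+1$ versus $e=-1$, and the $\pm$ in $\phi(F)=\pm F$) are consistently tracked so that the final class is $S$ and not $-S$, and checking that one genuinely has enough diffeomorphisms $D_{x_iy}$-type maps defined for \emph{all} indices $i$ and both $T^2$-directions to zero out every one of $a_2,c_2,\dots,a_g,c_g,f$ — the paper only explicitly wrote down $D_{x_1y}$, so I would either invoke the evident analogues (same construction with $z_i$ replaced by $x_i$ or the $T^2$-factors permuted, as is implicit in Theorem \ref{theorem:432}) or note that for $i\ge2$ the coordinates $a_i,c_i$ were already brought into a $\sigma_i^2=0$ configuration by Lemma \ref{lemma:1} and can be removed by Dehn twists alone. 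Once $h_{1*}h_{0*}h_*(\sigma)=S$, setting $h$ equal to this composition proves the lemma.
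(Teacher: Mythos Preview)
Your approach has the right core idea --- extract $e(\phi(S))=\pm1$ from Lemma \ref{lemma:4} and then use the $D$-type diffeomorphisms, which shift individual coordinates $a_i,b_i,c_i,d_i$ by $e$, to clear the torus components --- but several steps do not work as stated. Most seriously, your conclusion $h_{1*}(\sigma)=(\pm1,0,\dots,0,\pm1,0)=\pm S$ is false: in the chosen basis the first entry is $a_1$, the $T_{x_1y}$-coefficient, and the penultimate entry is $e$, so this class is $\pm T_{x_1y}\pm S$, not $\pm S$. You never killed $a_1$. There are smaller slips too: a single $R_{x_1}^{\pm1}$ or $R_{z_1}^{\pm1}$ does \emph{not} set $b_1=0$ from the configuration $(a_1,b_1,0,0)$ (check the formulas in Section \ref{section:0}), and the fallback that the residual $a_i,c_i$ for $i\ge2$ ``can be removed by Dehn twists alone'' fails because every $R_\alpha$ acts on $H_1(\Sigma_g)\otimes H_1(T^2)\subset H_2(M)$ as an automorphism and so cannot annihilate a nonzero torus component.

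All of this is repaired by using the $D$-maps more systematically, which is exactly the paper's route --- bypassing Lemma \ref{lemma:2} entirely. Once $e=1$, the paper defines the analogues $D_{z_iy},D_{x_it},D_{z_it}$ of $D_{x_iy}$ for every $i$, each shifting one of $a_i,b_i,c_i,d_i$ by $e=1$. The single composition $h=\prod_i(D_{z_iy})^{-a_i}(D_{x_it})^{-b_i}(D_{z_it})^{-c_i}(D_{x_iy})^{-d_i}$ (exponents read off from $\phi(S)$) then kills every torus coordinate at once, leaving $h_*\phi(S)=S+nF$. Finally, $f$ is not killed directly --- the $D$-maps only change it as a side effect --- but $n=0$ drops out immediately from $(h_*\phi(S))^2=0$.
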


\begin{proof}
Since $F\cdot\phi(S)=\pm 1$ by Lemma \ref{lemma:4}, we have $e(\phi(S))=\pm 1$. We may suppose that $e(\phi(S))=1$.

As the definition of the diffeomorphism $D_{x_{i}y}$, we can define diffeomorphisms $D_{x_{i}t}$, $D_{z_{i}y}$ and $D_{z_{i}t}$ for each $1\leq i\leq g$ with the following properties: 
\begin{eqnarray*}
(D_{z_{i}t})_{*}(\sigma)&=&(\dots, a_{i}, b_{i}, c_{i}+e, d_{i}, \dots ,e ,f-d_{i}), \\
(D_{x_{i}t})_{*}(\sigma)&=&(\dots, a_{i}, b_{i}+e, c_{i}, d_{i}, \dots ,e ,f-a_{i}), \\
(D_{z_{i}y})_{*}(\sigma)&=&(\dots, a_{i}+e, b_{i}, c_{i}, d_{i}, \dots ,e ,f-b_{i}),
\end{eqnarray*}
where $\sigma=(a_{1}, b_{1}, \dots, c_{g}, d_{g}, e, f)$.

Now, define $h$ by
\[(D_{z_{1}y})^{-a_{1}(\phi(S))}\circ(D_{x_{1}t})^{-b_{1}(\phi(S))}\circ\dots\circ(D_{z_{g}t})^{-c_{g}(\phi(S))}\circ(D_{x_{g}y})^{-d_{g}(\phi(S))}.\]
We have $h_{*}\circ\phi(S)=S+nF$ for some integer $n\in\mathbb{Z}$. Since $(h_{*}\circ\phi(S))\cdot (h_{*}\circ\phi(S))=0$, we have $n=0$.
\end{proof}

\begin{remark}
Diffeomorphisms defined in the proof are realized by the composition of diffeomorphisms defined in Section $\ref{section:0}$.
\end{remark}

\begin{lemma}\label{lemma:6}
Let $h:M\to M$ be a diffeomorphism. If $h_{*}(\sigma)=\varepsilon(\sigma)\sigma$ for some $\varepsilon(\sigma)\in\{\pm 1\}$ for all $\sigma\in\{T_{x_{1}y}, T_{z_{1}t}, T_{x_{1}t}, (-T_{z_{1}y}), \dots ,T_{x_{g}y}, T_{z_{g}t}, T_{x_{g}t}, (-T_{z_{g}y})\}$, $h_{*}(S)=S$ and $h_{*}(-F)=-F$, then we have $\varepsilon(T_{x_{i}y})=\varepsilon(T_{z_{i}t})=\varepsilon(T_{x_{i}t})=\varepsilon(-T_{z_{i}y})$ for all $1\leq i\leq g$. That is, if the diffeomorphism $h$ satisfies 
\[h_{*}(a_{1}, b_{1}, \dots, c_{g}, d_{g}, e, f) = (\varepsilon_{a_{1}}a_{1}, \varepsilon_{b_{1}}b_{1}, \dots, \varepsilon_{c_{g}}c_{g}, \varepsilon_{d_{g}}d_{g}, e, f)\]
for some $\varepsilon_{a_{1}}, \dots, \varepsilon_{d_{g}}\in\{\pm 1\}$, we have $\varepsilon_{a_{i}} = \varepsilon_{b_{i}} = \varepsilon_{c_{i}} = \varepsilon_{d_{i}}$ for all $1\leq i\leq g$.
\end{lemma}

\begin{proof}
Since $h_{*}(F) = h_{*}(y)\otimes h_{*}(t) = F$, We have $h_{*}(y), h_{*}(t)\in H_{1}(T^{2})\subset H_{1}(M)$. Now, we have $h_{*}(u)=\varepsilon(u)u$ for some $\varepsilon(u)\in\{\pm 1\}$ for all $u\in\{x_{1}, z_{1}, \dots, x_{g}, z_{g}, y, t\}$ by a computation using $h_{*}(T_{uv}) = h_{*}(u)\otimes h_{*}(v)$. Note that, we have $\varepsilon(T_{uv}) = \varepsilon(u)\varepsilon(v)$. Since $h_{*}(S)=S$ and $h_{*}(-F)=-F$, we have $\varepsilon(x_{i})=\varepsilon(z_{i})$ for all $1\leq i\leq g$ and $\varepsilon(y)=\varepsilon(t)$. Hence, we have $\varepsilon(T_{x_{i}y})=\varepsilon(T_{z_{i}t})=\varepsilon(T_{x_{i}t})=\varepsilon(-T_{z_{i}y})$ for all $1\leq i\leq g$.
\end{proof}

\begin{remark}\label{remark:1}\text{}
Conversely, for any $\varepsilon_{i}\in\{\pm 1\}$ $(1\leq i\leq g)$, the automorphism $\phi$ defined by
\[\phi(a_{1}, b_{1}, \dots, c_{g}, d_{g}, e, f)=(\varepsilon_{1}a_{1}, \varepsilon_{1}b_{1}, \dots, \varepsilon_{g}c_{g}, \varepsilon_{g}d_{g}, e, f) \]
is realized by a diffeomorphism of $M$. Since the symplectic representation
\[{\rm Diff}^{+}(\Sigma_{g})\to {\rm Sp}(H_{1}(\Sigma_{g}))\cong {\rm Sp}(2g, \mathbb{Z})\]
is surjective $($See B. Farb and D. Margalit~\cite{fm} Theorem 6.4$)$, we have a diffeomorphism $h_{1}$ of $\Sigma_{g}$ with the property $h_{1*}(x_{i})=\varepsilon_{i}x_{i}$ and $h_{1*}(z_{i})=\varepsilon_{i}z_{i}$ for all $1\leq i\leq g$. Then the diffeomorphism $h=h_{1}\times Id_{T^{2}}$ satisfies $h_{*}=\phi$.

However, Lemma \ref{lemma:6} implies that the automorphism $\phi\in\mathcal{H}$ defined by
\[\phi(a_{1}, b_{1}, \dots, c_{g}, d_{g}, e, f)=(-a_{1}, -b_{1}, c_{1}, d_{1}, \dots, -a_{g}, -b_{g}, c_{g}, d_{g}, e, f)\]
$($we show that this automorphism is indeed an element of $\mathcal{H}$ in the proof of Theorem \ref{theorem:431}$)$ is not realized by a diffeomorphism. Note that the condition $h\in{\rm Diff}(M)$ is used to consider the induced map on the first homology group. This gives the difference between $\mathcal{H}$ and $\operatorname{Im}\theta$.
\end{remark}

\begin{lemma}\label{lemma:7}
For any $\phi\in\mathcal{H}$, there exists an orientation preserving diffeomorphism $h:M\to M$ such that $h_{*}\circ\phi(\sigma)=\sigma$ up to sign for all
\[\sigma\in\{T_{x_{1}y}, T_{z_{1}t}, T_{x_{1}t}, (-T_{z_{1}y}), \dots,T_{x_{g}y}, T_{z_{g}t}, T_{x_{g}t}, (-T_{z_{g}y})\},\] $h_{*}\circ\phi(S)=S$ and $h_{*}\circ\phi(-F)=-F$. That is $h_{*}\circ\phi$ satisfies
\[h_{*}\circ\phi(a_{1}, b_{1}, \dots, c_{g}, d_{g}, e, f) = (\varepsilon_{a_{1}}a_{1}, \varepsilon_{b_{1}}b_{1}, \varepsilon_{c_{1}}c_{1}, \varepsilon_{d_{1}}d_{1}, \dots, e, f)\]
for some $\varepsilon_{a_{1}}, \dots, \varepsilon_{d_{g}}\in\{\pm 1\}$.
\end{lemma}

\begin{proof}
Suppose that $\phi\in\mathcal{H}$. By Lemma \ref{lemma:5}, we may assume that $\phi(S)=S$. By the assumption and Lemma \ref{lemma:3}, for any $u\in\{x_{1},z_{1}, \dots, x_{g}, z_{g}\}$ and $v\in\{y, t\}$, there exist $\tilde{u}(u, v)\in H_{1}(\Sigma_{g})$, $\tilde{v}(u, v)\in H_{1}(T^{2})$ and $n(u, v)\in\mathbb{Z}$ such that
\[\phi(T_{uv})=\tilde{u}(u, v)\otimes\tilde{v}(u, v)+n(u, v)(-F).\]
Since $\phi(T_{uv}\pm n(u, v)(-F))=\tilde{u}(u, v)\otimes\tilde{v}(u, v)$ for a suitable sign, $\tilde{u}(u, v)$ and $\tilde{v}(u, v)$ are primitive classes.

We show that $\tilde{v}(u_{1},v)=\tilde{v}(u_{2},v)$ up to sign for any $u_{1}, u_{2}\in\{x_{1},z_{1}, \dots, x_{g}, z_{g}\}$ and $v\in\{y, t\}$. Let $u_{1}, u_{2}, u_{3}$ be elements in $\{x_{1},z_{1}, \dots, x_{g}, z_{g}\}$ with $u_{i}\neq u_{j}$ for all $1\leq i<j\leq 3$. Suppose that $\tilde{v}(u_{i}, v)\neq\pm\tilde{v}(u_{j}, v)$ for all $1\leq i<j\leq 3$. Since $G(\tilde{u}(u_{i}, v)\otimes\tilde{v}(u_{i}, v)+\tilde{u}(u_{j}, v)\otimes\tilde{v}(u_{j}, v))=1$ for all $1\leq i<j\leq 3$, we have $\tilde{u}(u_{i}, v)=\tilde{u}(u_{j}, v)$ up to sign for all $1\leq i<j\leq 3$. This contradicts the independence of $T_{u_{1}v}$, $T_{u_{2}v}$, $T_{u_{3}v}$ and $(-F)$. Suppose that $\tilde{v}(u_{1}, v)=\tilde{v}(u_{2}, v)$ up to sign and $\tilde{v}(u_{2}, v)\neq\pm\tilde{v}(u_{3}, v)$. We have $\tilde{u}(u_{1}, v)\neq\pm\tilde{u}(u_{2}, v)$ and $\tilde{u}(u_{2}, v)=\tilde{u}(u_{3}, v)$ up to sign. Hence we have $\tilde{u}(u_{1}, v)\neq\pm\tilde{u}(u_{3}, v)$ and $\tilde{v}(u_{1}, v)\neq\pm\tilde{v}(u_{3}, v)$ and
this contradicts $G(\tilde{u}(u_{1}, v)\otimes\tilde{v}(u_{1}, v)+\tilde{u}(u_{3}, v)\otimes\tilde{v}(u_{3}, v))=1$. Now, we have $\tilde{v}(u_{1},v)=\tilde{v}(u_{2},v)$ up to sign for any $u_{1}, u_{2}\in\{x_{1},z_{1}, \dots, x_{g}, z_{g}\}$ and $v\in\{y, t\}$.

Next, we show that $\tilde{u}(u, y)=\tilde{u}(u, t)$ up to sign for any $u\in\{x_{1},z_{1}, \dots, x_{g}, z_{g}\}$. Suppose that $\tilde{u}(u, y)\neq\pm\tilde{u}(u, t)$. We have $\tilde{v}(u, y)=\tilde{v}(u, t)$ and this contradicts the independence of $T_{x_{1}y}$, $T_{z_{1}y}$, \dots, $T_{x_{g}y}$, $(-T_{z_{g}y})$, $T_{ut}$ and $(-F)$. Hence, $\tilde{u}(u, y)=\tilde{u}(u, t)$ up to sign for any $u\in\{x_{1},z_{1}, \dots, x_{g}, z_{g}\}$.

Now we may suppose that $\tilde{v}(x_{1}, v)=\tilde{v}(z_{1}, v)=\dots=\tilde{v}(x_{g}, v)=\tilde{v}(z_{g}, v)$ for each $v\in\{y, t\}$ and $\tilde{v}(x_{1}, y)\cdot\tilde{v}(x_{1}, t)=1$. Since $\phi$ preserves the intersection form $Q$, $\tilde{u}(x_{1}, y)$, $\tilde{u}(z_{1}, t)$, \dots, $\tilde{u}(x_{g}, y)$ and $\tilde{u}(z_{g}, t)$ form a symplectic basis of $H_{1}(\Sigma_{g})$. Take a diffeomorphism $h_{1}:\Sigma_{g}\to\Sigma_{g}$ such that $h_{1*}(\tilde{u}(u, y))=u$ for all $\{x_{1}, x_{2}, \dots, x_{g}\}$ and $h_{1*}(\tilde{u}(u, t))=u$ for all $\{z_{1}, z_{2}, \dots, z_{g}\}$. Furthermore, take a diffeomorphism $h_{2}:T^{2}\to T^{2}$ such that $\tilde{v}(x_{1}, y)=y$ and $\tilde{v}(x_{1}, t)=t$ and define a diffeomorphism $h$ of $M$ by $h=h_{1}\times h_{2}$. The composition $h_{*}\circ\phi$ satisfies $h_{*}\circ\phi(T_{uv})=\pm T_{uv}$ up to the fiber component for all $u\in\{x_{1},z_{1}, \dots, x_{g}, z_{g}\}$ and $v\in\{y, t\}$, $h_{*}\circ\phi(S)=S$ and $h_{*}\circ\phi(-F)=(-F)$. Since $(h_{*}\circ\phi(T_{uv}))\cdot (h_{*}\circ\phi(S))=(h_{*}\circ\phi(T_{uv}))\cdot S=0$, we have $h_{*}\circ\phi(T_{uv})=\pm T_{uv}$ for all $u\in\{x_{1},z_{1}, \dots, x_{g}, z_{g}\}$ and $v\in\{y, t\}$.
\end{proof}

Now, we prove Theorem \ref{theorem:431} and Theorem \ref{theorem:432}.

\begin{proof}[Proof of Theorem \ref{theorem:431}]
Let $\phi$ be an automorphism in $\mathcal{H}$. By Lemma \ref{lemma:7}, we may suppose
\[\phi(a_{1}, b_{1}, \dots, c_{g}, d_{g}, e, f) = (\varepsilon_{a_{1}}a_{1}, \varepsilon_{b_{1}}b_{1}, \varepsilon_{c_{1}}c_{1}, \varepsilon_{d_{1}}d_{1}, \dots, e, f)\]
for some $\varepsilon_{a_{1}}, \dots, \varepsilon_{d_{g}}\in\{\pm 1\}$. Since $\phi$ preserves the intersection form, we have  $\varepsilon_{a_{i}} = \varepsilon_{b_{i}}$ and $\varepsilon_{c_{i}} = \varepsilon_{d_{i}}$ for all $1\leq i\leq g$.

We show that $\varepsilon_{a_{i}}\varepsilon_{c_{i}}=\varepsilon_{a_{j}}\varepsilon_{c_{j}}$ for all $1\leq i<j\leq g$. For the sake of contradiction, suppose that $\varepsilon_{a_{i}}\varepsilon_{c_{i}}\neq\varepsilon_{a_{j}}\varepsilon_{c_{j}}$ for some $i$ and $j$. Let $\sigma=(x_{i}+x_{j})\otimes(y+t)$. By the assumption, $\phi(\sigma)$ is equal to
\[\pm((x_{i}-x_{j})\otimes y+(x_{i}+x_{j})\otimes t) \mbox{ or } \pm((x_{i}+x_{j})\otimes y+(x_{i}-x_{j})\otimes t).\]
By Theorem \ref{theorem:1}, we have $G(\sigma)=1$ and $G(\phi(\sigma))=2$, this is a contradiction.

Now, any element of $\mathcal{H}/\operatorname{Im}\theta$ is represented by an automorphism $\phi$ of the form
\[\phi(a_{1}, b_{1}, \dots, c_{g}, d_{g}, e, f) = (\varepsilon_{a_{1}}a_{1}, \varepsilon_{b_{1}}b_{1}, \dots, \varepsilon_{c_{g}}c_{g}, \varepsilon_{d_{g}}d_{g}, e, f)\]
with $\varepsilon_{*}\in\{\pm 1\}$, $\varepsilon_{a_{i}} = \varepsilon_{b_{i}}$, $\varepsilon_{c_{i}} = \varepsilon_{d_{i}}$ and $\varepsilon_{a_{1}}\varepsilon_{c_{1}} = \varepsilon_{a_{2}}\varepsilon_{c_{2}} = \dots = \varepsilon_{a_{g}}\varepsilon_{c_{g}}$. By Remark \ref{remark:1}, we may suppose that $\varepsilon_{c_{1}} = \dots = \varepsilon_{c_{g}} = 1$ and $\varepsilon_{a_{1}} = \dots = \varepsilon_{a_{g}}$. Hence we may take a representative of any element in $\mathcal{H}/\operatorname{Im}\theta$ from
\[\phi_{+}(a_{1}, b_{1}, \dots, c_{g}, d_{g}, e, f)=(a_{1}, b_{1}, c_{1}, d_{1}, \dots, a_{g}, b_{g}, c_{g}, d_{g}, e, f) \text{ or}\]
\[\phi_{-}(a_{1}, b_{1}, \dots, c_{g}, d_{g}, e, f)=(-a_{1}, -b_{1}, c_{1}, d_{1}, \dots, -a_{g}, -b_{g}, c_{g}, d_{g}, e, f).\]
Clearly we have $[\phi_{+}] = \operatorname{Im}\theta$ and we have $[\phi_{+}] \neq [\phi_{-}]\in \operatorname{Aut}(H_{2}(M))/\operatorname{Im}\theta$ by Remark \ref{remark:1}.

Now, it suffices to show the automorphism $\phi$ defined by
\[\phi(\sigma)=(-a_{1}, -b_{1}, c_{1}, d_{1}, \dots, -a_{i}, -b_{i}, c_{i}, d_{i}, \dots, -a_{g}, -b_{g}, c_{g}, d_{g}, e, f),\]
where $\sigma=(a_{1}, b_{1}, \dots, c_{g}, d_{g}, e, f)$, is indeed an element of $\mathcal{H}$. It is obvious that this automorphism preserves the intersection form. So we have only to check that $\phi$ preserves the minimal genus function. It suffices to show that $\phi(K)=K$, where $K$ is the subset of $H_{2}(M)$ defined by $K=\{\sigma\in H_{2}(M)\mid G(\sigma)\leq1\}$. Suppose that $\sigma\in H_{2}(M)$ satisfies $G(\sigma)\leq 1$. There exist classes $u\in H_{1}(\Sigma_{g})$ and $v\in H_{1}(T^{2})$ and an integer $n\in\mathbb{Z}$ such that $\sigma=u\otimes v+n(-F)$. Let $u=\sum_{i=1}^{g}(\alpha_{i}x_{i}+\beta_{i}z_{i})$ and $v=py+qt$, where $\alpha_{i}$, $\beta_{i}$, $p$ and $q$ are integers. We have $\phi(\sigma)=(\Sigma_{i=1}^{g}(\alpha_{i}x_{i}-\beta_{i}z_{i}))\otimes(-py+qt)+n(-F)$ and, hence, $G(\phi(\sigma))\leq 1$. Now, we have $\phi(K)\subset K$. Since $\phi^{2}$ is the identity map, we have $K\subset\phi(K)$. Therefore, $\phi$ is an element of $\mathcal{H}$.
\end{proof}

\begin{proof}[Proof of Theorem \ref{theorem:432}]
All diffeomorphisms used in this subsection are given by compositions of $R_{\alpha}$ $(\mbox{for } \alpha\in H_{1}(\Sigma_{g}) \mbox{ primitive})$, $D_{x_{1}y}$, $f_{y}$, $f_{t}$ and $h$. Therefore, ${\rm Im}\theta$ is generated by  $(R_{\alpha})_{*}$, $(D_{x_{1}y})_{*}$, $(f_{y})_{*}$, $(f_{t})_{*}$ and $h_{*}$.
\end{proof}


\section{topologically locally-flat embeddings}\label{section:3}

In this section, we observe topologically locally-flatly embedded surfaces in $M=\Sigma_{g}\times T^{2}$. If a second homology class $\sigma\in H_{2}(M)$ has self-intersection zero, the genus minimizing smoothly embedded surface we constructed before also gives minimal genus among the topologically locally-flat surfaces. Note that, for the case $F\cdot\sigma\neq 0$, the minimality follows from the fact that any continuous map $\phi:\Sigma_{h}\to\Sigma_{g}$ with $\phi_{*}[\Sigma_{h}]=n[\Sigma_{g}]\in H_{2}(\Sigma_{g})$ for a positive integer $n$ gives $h\geq n(g-1)+1$.
 
However, we prove the following theorem.
\begin{theorem}\label{tlfe}
There are topologically locally-flat surfaces in $M$ whose genera are strictly smaller than the lower bound for smooth surfaces stated in Theorem \ref{theorem:1}.
\end{theorem}

To construct such surfaces, we need the following result shown by Lee Rudolph~\cite{l}.

\begin{theorem}[Rudolph]
For any integer $n\geq 6$, there is a connected topologically locally-flat surface $\Sigma$ in $\mathbb{CP}^{2}$ representing $n[\mathbb{CP}^{1}]\in H_{2}(\mathbb{CP}^{2})$ with genus strictly smaller than $\dfrac{1}{2}(n-1)(n-2)$ that transversely intersects with a complex line $C$ in $n$ points.
\end{theorem}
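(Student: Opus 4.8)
The plan is to build $\Sigma$ from a singular complex plane curve of degree $n$ by resolving its singularity in the topological category more cheaply than the smooth category permits. First I would take a concrete unicuspidal rational curve, for instance $C_{0}=\{y^{n-1}z=x^{n}\}\subset\mathbb{CP}^{2}$: this is irreducible of degree $n$, smooth away from the single point $p_{0}=[0:0:1]$, and has a cusp there whose link is the torus knot $L=T(n-1,n)\subset S^{3}$. Its normalization is $\mathbb{CP}^{1}$, so the geometric genus is $0=\frac{1}{2}(n-1)(n-2)-\delta_{p_{0}}$ with $\delta_{p_{0}}=\frac{1}{2}(n-1)(n-2)$, and since the cusp is unibranch this $\delta_{p_{0}}$ is also the genus of the Milnor fibre at $p_{0}$. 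As we only need $g(\Sigma)<\frac{1}{2}(n-1)(n-2)$, it is enough to shave off even a single unit of genus at $p_{0}$.

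Next I would excise a small round ball $B$ centred at $p_{0}$, so that $C_{0}\cap B$ is the embedded cone on $L$ --- a topological disk that fails to be locally flat exactly at $p_{0}$ --- and replace it by a connected topologically locally-flat Seifert surface $S\subset B$ for $L$ whose genus is strictly less than $\frac{1}{2}(n-1)(n-2)$. Gluing $S$ to $C_{0}\setminus B$ along $L$ yields a closed surface $\Sigma\subset\mathbb{CP}^{2}$ that is smooth outside $B$ and locally flat inside $B$, hence topologically locally flat; since the modification is supported inside the ball $B\cong B^{4}$, where $H_{2}$ vanishes, we have $[\Sigma]=[C_{0}]=n[\mathbb{CP}^{1}]$, while an Euler-characteristic count gives $g(\Sigma)=g(S)<\frac{1}{2}(n-1)(n-2)$. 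Connectedness is automatic, since $C_{0}$ is irreducible, $C_{0}\setminus B$ is a disk, and $S$ is connected.

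For the stated intersection with a complex line, note that a generic line $C$ meets $C_{0}$ transversally in exactly $n$ points, all of them smooth points of $C_{0}$ (a generic line avoids $p_{0}$ and is nowhere tangent to $C_{0}$). Choosing $B$ small enough that $\overline{B}\cap C=\emptyset$, the surgery happens away from $C$, so $\Sigma\cap C=C_{0}\cap C$ consists of $n$ transverse points, which settles the last requirement.

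The crux, and the step I expect to be the main obstacle, is the construction of the locally-flat surface $S$ --- equivalently, the assertion that the \emph{topological} $4$-genus of $T(n-1,n)$ is strictly smaller than its Seifert genus $\frac{1}{2}(n-1)(n-2)$. No smooth improvement is possible: by the local Thom conjecture (Kronheimer--Mrowka) the smooth $4$-genus of an algebraic link equals its Milnor-fibre genus, so any saving must be genuinely topological. The route I would take is to push a minimal genus Seifert surface of $T(n-1,n)$ into $B^{4}$ and perform ambient surgery along a system of non-separating simple closed curves on it that carry the excess genus, arranging each such curve to be either an unknot with trivial complementary framing or, more generally, a knot of Alexander polynomial one; Freedman's disk embedding theorem --- applicable because the relevant fundamental group reduces to $\mathbb{Z}$, which is good --- then furnishes embedded locally-flat surgery disks that no smooth disk could provide. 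The delicate part is to choose those curves so that they are simultaneously unknotted in the complement, to keep the surgery disks disjoint, and to check that a strictly positive net genus reduction survives; this is exactly where the hypothesis $n\geq 6$ enters, as the smallest degree at which such a torus-knot cusp admits a topological resolution of strictly smaller genus. Everything else --- the plane-curve geometry of $C_{0}$ and the smooth-to-topological gluing --- is routine.
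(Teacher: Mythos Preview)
The paper does not prove this statement: it is quoted verbatim as a theorem of Rudolph (reference~\cite{l}) and used as a black box in Section~\ref{section:3}. There is therefore no ``paper's own proof'' to compare against; the only thing the paper adds is the remark immediately following the statement, namely that Rudolph's surface, restricted to the complement of a tubular neighbourhood of the line $C$, is a locally-flat surface in $B^{4}$ with boundary the $(n,n)$--torus link and genus strictly less than $\tfrac12(n-1)(n-2)$.

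Your outline is in the right spirit---Rudolph's argument does go through Freedman's theorem that Alexander-polynomial-one knots bound locally-flat disks in $B^{4}$---but your specific model differs from his. You propose the unicuspidal rational curve $\{y^{n-1}z=x^{n}\}$ and hence must reduce the topological $4$-genus of the knot $T(n-1,n)$. Rudolph instead works with a smooth degree-$n$ curve: removing a neighbourhood of a transverse line leaves the fibre surface of the $(n,n)$--torus \emph{link} in $B^{4}$, and it is on that surface that he locates an explicit essential simple closed curve which is an (untwisted) Alexander-polynomial-one knot in $S^{3}$. Compressing along Freedman's disk drops the genus by one. That the paper's follow-up remark singles out the $(n,n)$--link, not $T(n-1,n)$, reflects this.

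The genuine gap in your proposal is the step you yourself flag as ``the crux'': you do not exhibit any curve on the Seifert surface of $T(n-1,n)$ with trivial Alexander polynomial, nor do you justify the assertion that $n\geq 6$ is exactly the threshold at which such a curve appears for \emph{your} link $T(n-1,n)$. (It is now known, by later work of Baader--Feller--Lewark--Liechti and others, that $g_{4}^{\mathrm{top}}(T(n-1,n))<g_{4}(T(n-1,n))$ for all sufficiently large $n$, but extracting the precise threshold and an explicit surgery curve is not routine, and in any case postdates Rudolph.) Everything else in your write-up---the B\'ezout count for $\Sigma\cap C$, the homology and Euler-characteristic bookkeeping, the gluing---is fine.
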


Note that if we take a sufficiently large $4$-ball $B\subset\mathbb{C}^{2}\cong\mathbb{CP}^{2}\setminus C$ $($that is, $B$ is a complement of a small tubular neighborhood of $C$ in $\mathbb{CP}^{2}$$)$, we have a connected topologically locally-flat surface $\Sigma'=\Sigma\cap B$ in $B$ so that the boundary $\partial\Sigma'\subset\partial B\cong S^{3}$ is an $(n, n)$-torus link and the genus of $\Sigma'$ is strictly smaller than $\dfrac{1}{2}(n-1)(n-2)$.

In the following, we construct topologically locally-flat surfaces whose genera are strictly smaller than the lower bound for smooth surfaces stated in Theorem \ref{theorem:1} for classes $\sigma$ of the form $\sigma=eS+fF$ with $e,f\geq 5$. For simplicity, we suppose that $g=1$, $n=6$ and $\Sigma' = \Sigma_{9, 6}$, where $\Sigma_{g, h}$ is a compact oriented surface of genus $g$ with $h$ boundaries.

\begin{proof}[Proof of Theorem\ref{tlfe}]
Suppose that $\sigma$ is represented by $e$ parallel copies of sections and $f$ parallel copies of fibers of the trivial bundle $\Sigma_{1}\times T^{2}\to \Sigma_{1}$, that is $\sigma$ is represented by
\[\Sigma_{\sigma} = (T^{2}\times\{p_{1},\dots, p_{e}\}\times \{0\})\cup (\{q_{1},\dots, q_{f}\}\times \{0\}\times T^{2}).\]
Then the intersection with $S^{1}\times \{0\}\times S^{1}\times \{0\}$ is visualized as Figure \ref{figure:7}. Take a $4$-cube $I^{4}\subset T^{4}$ which contains all intersection points.
The boundary link is
\[(S^{1}\times \{p_{1},\dots, p_{e}\}\times \{0\})\cup (\{q_{1},\dots, q_{f}\}\times \{0\}\times S^{1})\subset (S^{1}\times I^{2})\cup (I^{2}\times S^{1})\cong S^{3}.\]

\begin{figure}[h]
\centering
\includegraphics[height=4cm]{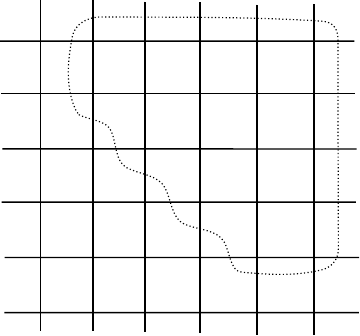}
\caption{\label{figure:7}}
\end{figure}

Attach $2$-dimensional $1$-handles to the link as in Figure \ref{figure:9} and we have a link as the right side of Figure \ref{figure:9}. Note that, this link appears as the boundary link of the suitable $4$-ball $B'$ which contains intersection points enclosed by the dotted line in Figure \ref{figure:7}. 

\begin{figure}[h]
\centering
\includegraphics[height=5.2cm]{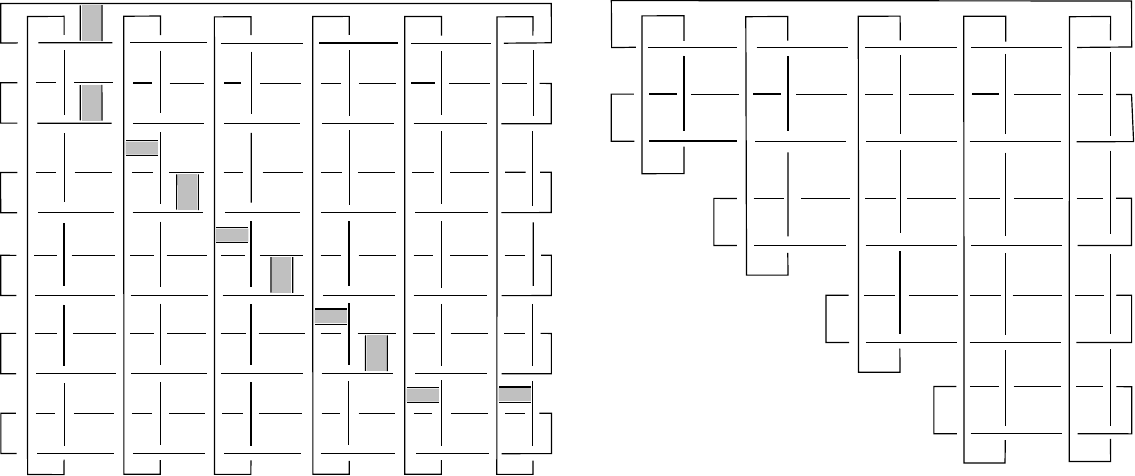}
\caption{\label{figure:9}}
\end{figure}

Remove the open ball ${\rm Int}B'$ from $(M, \Sigma_{\sigma})$ and attach $2$-dimensional $1$-handles to the remaining immersed surface as in Figure \ref{figure:6} $($We may assume that these $1$-handles are arranged in the boundary sphere$)$. Then we have a new surface with boundary $L'\subset S^{3}$ which is isotopic to the $(6,6)$-torus link. Note that this surface consists of $e + f - 10$ tori, two copies of $\Sigma_{1,1}$ and four of $\Sigma_{2, 1}$.

\begin{figure}[h]
\centering
\includegraphics[height=5.2cm]{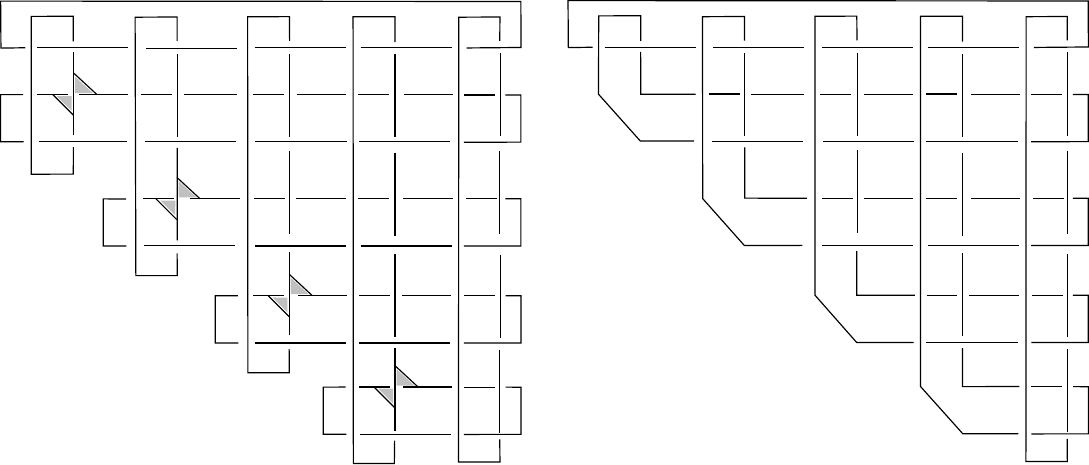}
\caption{\label{figure:6}}
\end{figure}

We have a singular surface representing $\sigma$ by gluing $M\setminus{\rm Int}B'$ and $(B, \Sigma')$ via a homeomorphism $(S^{3}, L')\to(S^{3}, \partial\Sigma')$. This surface consists of $e + f - 10$ tori and a closed oriented surface of genus $19$ and has $ef - 19$ intersection points.

Now we have a connected topologically locally-flat surface $\Sigma$ representing $\sigma$ after smoothing all singularities. Note that $e + f - 10$ singular points are used to connect each component and $ef - e - f - 9$ singular points affect the genus of the obtained surface. Hence the surface $\Sigma$ has genus 
\[(e + f - 10) + 19 + (ef - e - f - 9) = ef,\]
which is strictly smaller than $1+ef=1+\dfrac{1}{2}|\sigma\cdot\sigma|$.
\end{proof}


\section*{Acknowledgements}
I would like to thank Jae Choon Cha and Kouichi Yasui for helpful comments and encouragement. I would like to thank anonymous referee for valuable suggestions on the draft, especially Remark \ref{rmk:1}. I am grateful to my supervisor Takuya Sakasai for support and advice for my writing.


\end{document}